 \newtheorem{theorem}{Theorem}[section]
 \newtheorem{lemma}[theorem]{Lemma}
 \newtheorem{proposition}[theorem]{Proposition}
 \newtheorem{example}[theorem]{Example}
 \newtheorem{definition}[theorem]{Definition}
 \newtheorem{remark}[theorem]{Remark}
 \newtheorem{assumption}[theorem]{Assumption}
 \numberwithin{equation}{section}
 \newcommand{\RR}{\mathbb{R}}
 \newcommand{\NN}{\mathbb{N}}
 \newcommand{\CC}{\mathbb{C}}
 \newcommand{\abs}[1]{\left\vert#1\right\vert}
 \newcommand{\norm}[1]{\left\Vert#1\right\Vert}
  \newcommand{\Om}{\Omega}
 \def\pOm{\partial\Omega}
 \def\bOm{\overline{\Om}}
\def\Omc{\mathbb{R}^N\setminus\Omega}
\def\Omb{\mathbb{R}^N\setminus\overline{\Omega}}
\def\RR{{\mathbb{R}}}
\def\NN{{\mathbb{N}}}
\def\CC{{\mathbb{C}}}
\def\Om{\Omega}
\def\bOm{\overline{\Om}}
\def\pOm{\partial\Omega}
\begin{document}

\title[Approximate controllability]{Approximate  and mean approximate controllability properties for Hilfer time-fractional differential equations }

\author{Ernest Aragones}
\address{%
E. Aragones, University of Puerto Rico, Rio Piedras Campus\\
Faculty of Natural Sciences, Department of Mathematics \\
 17 University AVE. STE 1701, San Juan PR 00925-2537 (USA)}
\email{ernest.aragones@upr.edu}

\author{Valentin Keyantuo}
\address{%
V. Keyantuo, University of Puerto Rico, Rio Piedras Campus\\
Faculty of Natural Sciences, Department of Mathematics \\
 17 University AVE. STE 1701, San Juan PR 00925-2537 (USA)}
 \email{valentin.keyantuo1@upr.edu}

\author{Mahamadi Warma}
\address{M. Warma, Department of Mathematical Sciences,  George Mason University. Fairfax, VA 22030 (USA). }
\email{mwarma@gmu.edu}

\thanks{The work of the authors is partially supported by the Air Force Office of Scientific Research under Award NO: FA9550-18-1-0242}

\keywords{Fractional differential equations, Mittag-Leffler function, existence and regularity of solutions, interior approximate controllability}

\subjclass{93B05, 26A33, 35R11}

\begin{abstract}
We study the approximate and mean approximate controllability properties of fractional partial differential equations associated with the so-called Hilfer type time-fractional derivative  and a non-negative selfadjoint operator $A_B$ with a compact resolvent on $L^2(\Omega)$, where $\Omega\subset\mathbb{R}^N$ ($N\ge 1$) is a bounded open set.  More precisely, we show that if $0\le\nu\le 1$, $0<\mu\le 1$ and $\Omega\subset\RR^N$ is a bounded open set, then the system 
$$\mathbb D_t^{\mu,\nu} u+A_Bu=f|_{\omega}\;\; \mbox{ in }\; \Omega\times (0,T),\,\,  (\mathbb I_t^{(1-\nu)(1-\mu)}u)(\cdot,0)=u_0  \mbox{ in }\;\Omega,$$  is approximately controllable for any $T>0$, $u_0\in L^2(\Om)$ and any non-empty open set $\omega\subset\Omega$. In addition, if the operator $A_B$ has the unique continuation property, then the system is also mean approximately controllable.
The operator $A_B$ can be the realization in $L^2(\Omega)$ of a symmetric, non-negative uniformly elliptic second order operator with Dirichlet or Robin boundary conditions, or the realization in $L^2(\Omega)$ of the fractional Laplace operator $(-\Delta)^s$ ($0<s<1$) with the Dirichlet exterior condition, $u=0$ in $\RR^N\setminus\Om$, or the nonlocal Robin exterior condition, $\mathcal N^su+\beta u=0$ in $\Omb$.
\end{abstract}

\maketitle

\section{Introduction}

Let $\Omega\subset\RR^N$ ($N\ge 1$) be a bounded open set with boundary $\pOm$. The main concern of the present paper is to study the controllability properties of a class of fractional (possible space-time) differential equations involving the so-called Hilfer time-fractional derivative. More precisely, we consider the following initial  value problem:
\begin{equation}\label{main-EQ}
\begin{cases}
\mathbb D_t^{\mu,\nu} u+A_Bu=f|_{\omega}\;\;&\mbox{ in }\; \Omega\times (0,T),\\
(\mathbb I_t^{(1-\nu)(1-\mu)}u)(\cdot,0)=u_0 &\mbox{ in }\;\Omega,
\end{cases}
\end{equation}
where $T>0$ and $0\le\nu\le 1$, $0<\mu\le 1$ are real numbers, $\mathbb D_t^{\mu,\nu} u$ denotes the Hilfer time-fractional derivative of order $({\mu,\nu})$ of the function $u$ formally defined by 
\begin{align}\label{hfd}
\mathbb{D}_t^{\mu,\nu} u(t) := \mathbb I_t^{ \nu (1-\mu)}\frac{d}{dt}\left(\mathbb I_t^{(1-\nu)(1-\mu)}u\right)(t),\,\;\;t>0.
\end{align}
In \eqref{hfd}, for a real number $\alpha\ge 0$, $\mathbb I_t^\alpha$ denotes the Riemann-Liouville fractional integral of order $\alpha$
 (see \eqref{cfd} below for more details). 
 
 In \eqref{main-EQ},  the operator $A_B$ is a non-negative selfadjoint operator on $L^2(\Omega)$ with compact resolvent, $u=u(x,t)$ is the state of the system to be controlled and $f=f(x,t)$ is the control function which is localized in a non-empty open set $\omega\subset\Omega$.
 
 Let $f\in L^2(\omega\times (0,T))$ and $u_0\in C((0,T];L^2(\Omega))$ a solution of the system \eqref{main-EQ}. Then the set of reachable states is given by
 \begin{align*}
 \mathcal R(u_0,T):=\Big\{u(\cdot,T):\; f\in  L^2(\omega\times (0,T))\Big\}.
 \end{align*}
 
 We shall say the system \eqref{main-EQ} is null controllable if $0\in\mathcal R(u_0,T)$; exactly controllable if $\mathcal R(u_0,T)=L^2(\Omega)$; and approximately controllable if $\mathcal R(u_0,T)$ is dense in $L^2(\Omega)$. It is easy to see that exactly controllable implies null controllable which in turn implies approximately controllable. But the reserve implications are not true in general.
 
We will say that our system is mean approximately controllable if the set 
\begin{align*}
\left\{\mathbb I_t^{(1-\nu)(1-\mu)}u(\cdot,T):\; f\in L^2(\omega\times (0,T))\right\}
\end{align*}
is dense in $L^2(\Omega)$. This is a totally new notion of controllability and is different from the classical approximate controllability in the case $(1-\nu)(1-\mu)\ne 0$; otherwise the two notions coincide as we shall specify below.  It is also easy to see that  exactly controllable implies null controllable which also implies mean approximately controllable. But we do not know if there is any implication between mean approximately and approximately  controllable, except for the already observed fact that for $(1-\mu)(1-\nu)=0$ the two notions coincide.
 
 In our framework, the operator  $A_B$ can be a realization in $L^2(\Omega)$ of a symmetric and uniformly elliptic second order operator with bounded measurable coefficients subject to the Dirichlet or Robin type boundary conditions. Another example for the operator $A_B$ is a realization in $L^2(\Omega)$ of the fractional Laplace operator $(-\Delta)^s$ ($0<s<1$) with the Dirichlet exterior condition $u=0$ in $\RR^N\setminus\Om$, or the nonlocal Robin exterior condition, $\mathcal N^su+\beta u=0$ in $\Omb$, where $\beta\in L^1(\RR^N\setminus\Om)$ is a non-negative given function and $\mathcal N^su$ denotes the nonlocal normal derivative of the function $u$ (see \eqref{NLND} for the precise definition). We emphasize that with a small modification of our proofs the Neumann boundary condition (for second order elliptic operators) or the nonlocal Neumann exterior condition $\mathcal N^su=0$ in $\Omb$ (for the fractional Laplace operator) can be also included in our framework.
 
 
When $\mu=1$, the system \eqref{main-EQ} is the well-known evolution equation of the first order which has been intensively studied. The heat and the Schr\"odinger equations are included in this framework. The null or/and the approximate controllability of such a system is well-known and has been investigated by several authors when $A_B$ is a realization in $L^2(\Omega)$ of a uniformly elliptic second order operator with various boundary conditions (Dirichlet, Neumann and Robin). We refer for instance  to the monographs \cite{Zua-el, Zua1} and their references for a complete overview.  
Instead, if $A_B$ is a realization in $L^2(\Omega)$ of the fractional Laplace operator $(-\Delta)^s$ ($0<s<1$) with Dirichlet, nonlocal Neumann or nonlocal Robin exterior conditions, little is known regarding the fractional heat equation.  In one space dimension ($N=1$), using some properties of the eigenfunctions, eigenvalues and the associated Ingham conditions, it has been shown that the fractional heat equation with the Dirichlet exterior condition is null controllable in any time $T>0$ if and only if $\frac 12<s<1$. See e.g. \cite{Bi-He,BWZ} for the interior control and \cite{ABPWZ,Wa-Za} for the exterior control, that is, when the control region $\omega$ is localized in $\Omb$. The case of the nonlocal Neumann and Robin exterior conditions remains open due to the lack of information on the associated eigenvalues and eigenfunctions.
In space dimension $N\ge 2$, the best possible controllability result available for the fractional heat and wave equations is the approximate controllability recently proved in \cite{Ke-Wa,LR-WA,War-AA,War-SIAM}. The case of the fractional Schr\"odinger equation in space dimension $N\ge 1$ has been studied in \cite{Bic} where the author has shown that the system is null controllable for large enough time $T$ if and only if $\frac 12\le s<1$. The main tool used is the fractional version of the Pohozaev identity established in \cite{R-S-P}. This is the only model that we can deal in the multi-dimensional setting.



If $\nu=1$ and $0<\mu<1,$ then $\mathbb D_t^{\mu}:=\mathbb D_t^{\mu,0}$ is the Caputo time fractional derivative of order $\mu.$  In this context, when $A_B=(-\Delta)^s$ ($0<s<1$) with the Dirichlet exterior condition, the exterior approximate controllability  properties have been investigated in \cite{War-SIAM}. Following the same ideas one can also derive some interior approximate controllability results.

 
The interior approximate controllability in the case    $\mathbb D_t^{\mu}:=\mathbb D_t^{\mu,0}$ has been investigated in \cite{FY}  where the authors have shown that for a symmetric non-negative uniformly elliptic second order operator with the Dirichlet boundary condition, the corresponding fractional diffusion system is approximately controllable in any $T>0$, $\omega\subset\Omega$ an arbitrary non-empty open set and any $f\in C_0^\infty(\omega\times (0,T))$.

 Most recently, it has been shown in \cite{EZ} that for any $\mu>0$, ($\mu\not\in\NN$) the Caputo type system of order $\mu$ is not null controllable in any time $T>0$, that is, for example if $0<\mu<1$,  then there is no control function $f$ such that the solution $u$ of the associated system can rest at some time $T>0$. This also shows that such a system cannot be exactly controllable.
 We would like to emphasize that the proof of the non-null controllability given in \cite{EZ} also works for the system \eqref{main-EQ} when $\mu\ne 1$.

In the case $\nu=0$ and $0<\mu< 1,$ the above system becomes the fractional evolution equation with the Riemann-Liouville time-fractional derivative of order $\mu.$ Such equations have been intensively studied. The approximate controllability for equations in this class has been studied in \cite{LiuLi2015}. In the case of the Riemann-Liouville time-fractional derivatives, the initial condition is nonlocal as in \eqref{main-EQ} when $(1-\nu)(1-\mu)\ne 0$. 

Motivated by  these results, we propose in this paper to investigate the case of the general fractional evolution equation as stated in \eqref{main-EQ} which includes all the above mentioned works. 

We summarize the novelties and the main results obtained in the present paper.

\begin{itemize}
\item As we have already mentioned, the system considered in the present paper is very general and  it includes almost all possible fractional order PDEs. In addition our framework includes not only the classical second order elliptic operators, but it also allows elliptic operators of fractional order like the fractional Laplace operator or the regional fractional Laplace operator (see e.g. \cite{War-N,War-In} for the definition of the regional fractional Laplacian).

\item The first main result (Theorem \ref{main-Theo}) states that  if $0\le \nu< 1$, $0<\mu< 1$ and $\omega\subset\Omega$ is an arbitrary non-empty open set, then the system \eqref{main-EQ} is approximately controllable in any time $T>0$. Given that such a system cannot be null controllable if $\mu\ne 1$ (by \cite{EZ}), the approximate controllability is the best possible result that can be expected in the study of the classical  controllability properties of the system \eqref{main-EQ}. 

\item Under the above hypothesis on $\omega$ and $0\le \nu\le 1$, $0<\mu\le 1$, our second main result (Theorem \ref{second-Theo}) shows that if in addition the operator $A_B$ has the unique continuation property (see \eqref{ucp}), then the system \eqref{main-EQ} is also mean approximately controllable in any time $T>0$. 

\item Finally, we show that the mean approximate controllability of  \eqref{main-EQ} is equivalent to the unique-continuation principle for solutions of the associated adjoint system \eqref{ACP-Dual}, that is,
\begin{align*}
(v\;\mbox{ solution of }\; \eqref{ACP-Dual},\;  v\big|_{\omega\times(0,T)}=0) \Longrightarrow \;v=0 \;\mbox{ in }\;\Omega\times (0,T).
\end{align*}
In addition  this seems not to be the case for the approximate controllability, unless $(1-\nu)(1-\mu)=0$, in which case mean approximate and approximate controllabilities are the same notions. 
\end{itemize}

%

As we can observe in the system \eqref{main-EQ}, if $(1-\mu)(1-\nu)\ne 0$, then the initial condition is given in terms of the Riemann-Liouville fractional integral.
On the contrary, initial conditions for the Caputo derivatives (that is, when $\nu=1$ and $0<\mu<1$) are expressed
in terms of initial values of integer order derivatives. This allows for a
numerical treatment of initial value problems for differential equations of non
integer order independently of the chosen definition of the fractional derivative.
For this reason, many authors either resort to Caputo derivatives, or
use the Riemann-Liouville derivatives but avoid the problem of initial values
of fractional derivatives by treating only the case of zero initial conditions.  

The interesting paper \cite{Pod} has provided a series of examples from the field of viscoelasticity which demonstrate
that it is possible to attribute physical meaning to initial conditions
expressed in terms of Riemann-Liouville fractional derivatives (as in \eqref{main-EQ}),
and that it is possible to obtain initial values for such initial conditions by appropriate measurements or observations. The mentioned examples include: The Spring-pot model, which is a linear viscoelastic element whose behavior is intermediate between that of an elastic element 
and a viscous element; a stress relaxation or a general deformation; and an impulse response. For more details we refer to \cite{Pod} and the references therein.

 Fractional order operators have recently emerged as a modeling alternative in various branches of science and technology. In fact, in many situations, the fractional models reflect better the behavior of the system both in the deterministic and stochastic contexts. 
A number of stochastic models for explaining anomalous diffusion have been
introduced in the literature; among them we mention  the fractional Brownian motion; the continuous time random walk;  the L\'evy flights; the Schneider grey Brownian motion; and more generally, random walk models based on evolution equations of single and distributed fractional order in  space (see e.g. \cite{DS,GR,Man,Sch}).  In general, a fractional diffusion operator corresponds to a diverging jump length variance in the random walk. We refer to \cite{NPV,Val,GW-CPDE,GW-B,Pod} and the references therein for a complete analysis, the derivation and the applications of fractional order operators.

The rest of the paper is organized as follows. In Section \ref{main-res} we state the main results of the article and give some examples of operators that apply to our situation. Section \ref{preli} contains some intermediate results that are needed in the proofs of our main results. More precisely, we prove the existence and uniqueness  of weak solutions to the system \eqref{main-EQ} and its associated adjoint system, and give the representation of solutions in terms of series involving the Mittag-Leffler functions. The proofs of the main results are given in Section \ref{prof-ma-re}.

\section{Main results and examples}\label{main-res}

In this section we state the main results of the paper and give some examples. Let $\Omega\subset\RR^N$ ($N\ge 1$) be a bounded open set.
First, we introduce our assumption on the operator $A_B$.


\begin{assumption}\label{asum-A}
 We assume that $A_B$ is a non-negative,  selfadjoint operator on $L^2(\Omega)$ with  domain $D(A_B)$, and that the embedding $D(A_B)\hookrightarrow L^2(\Omega)$ is compact, where $D(A_B)$ is endowed with the graph norm. 
\end{assumption}

It follows from Assumption \ref{asum-A}  that $A_B$ is given by a bilinear, symmetric, continuous, elliptic and closed form $\mathcal E_B$ with domain $D(\mathcal E_B):=V_{\frac 12}$ and $\mathcal E_B(u,u)\ge 0$ for every $u\in V_{\frac 12}$.  Moreover, we have that  $A_B$ has a compact resolvent, hence, its eigenvalues form a non-decreasing sequence of real numbers $0\le \lambda_1<
\lambda_2<\cdots<\lambda_n\cdots$ such that $\lim_{n\to\infty}\lambda_n=\infty$. We assume that the first eigenvalue $\lambda_1>0$. We denote by $(\varphi_n)_{n\in\NN}$ the orthonormal basis of normalized eigenfunctions associated with the eigenvalues $(\lambda_n)_{n\in\NN}$. Then $\varphi_n\in D(A_B)$ for every $n\in\NN$ and $(\varphi_n)_{n\in\NN}$ is total in $V_{\frac 12}$ and in $L^2(\Omega)$.

Throughout the remainder of the article, 
without any mention we shall denote by $V_{-\frac 12}$ the dual of $V_{\frac 12}$ with respect to the pivot space $L^2(\Omega)$, so that we have the following continuous embeddings: $V_{\frac 12} \hookrightarrow L^2(\Omega)\hookrightarrow V_{-\frac 12}$. 
Moreover, $(\cdot,\cdot)_{L^2(\Om)}$ will designate the scalar product in $L^2(\Omega)$ and $\langle \cdot,\cdot\rangle_{V_{-\frac 12},V_{\frac 12}}$ will denote the duality pairing between $V_{-\frac 12}$ and $V_{\frac 12}$.

\subsection{Main results}

We first introduce the notion of weak solutions to the system \eqref{main-EQ}. 

\begin{definition}\label{def-strong-sol}
Let $u_0\in L^2(\Omega)$ and $f$  a given function.
A function $u$ is said to be a weak solution of the system \eqref{main-EQ}, if for every $T>0$,   the following properties hold:
\begin{itemize}
\item Regularity and initial condition:
\begin{equation}\label{regu}
\begin{cases}
u\in C((0,T];V_{\frac 12}), \\
\mathbb I_t^{(1-\nu)(1-\mu)}u\in C([0,T];L^2(\Omega)),\\
\mathbb D_t^{\mu,\nu} u\in C((0,T];V_{-\frac 12}),
\end{cases}
\end{equation}
and $\mathbb I_t^{(1-\nu)(1-\mu)}u(\cdot,0)=u_0$.

\item Variational  identity: For every $\varphi\in  V_{\frac 12}$ and a.e. $t\in (0,T)$, we have
\begin{align}\label{Var-I}
\langle \mathbb D_t^{\mu,\nu} u(\cdot,t),\varphi\rangle_{V_{-\frac 12},V_{\frac 12}}+\mathcal E_B(u(\cdot,t),\varphi)-(f(\cdot,t),\varphi)_{L^2(\Om)}=0.
\end{align}
\end{itemize}
\end{definition}

Now, we introduce our two notions of approximately controllable.   

\begin{definition} 
The system \eqref{main-EQ} is said to be approximately controllable in time $T>0$, if for every $u_0$, $u_1\in L^2(\Om)$ and $\varepsilon>0$, there exists a control function $f\in L^2((0,T);L^2(\omega))=L^2(\omega\times(0,T))$ such that the weak  solution $u$ of \eqref{main-EQ} satisfies
\begin{align}\label{e23}
\norm{u(\cdot,T)-u_1}_{L^2(\Om)}\le \varepsilon.
\end{align}
\end{definition}

\begin{definition} 
The system \eqref{main-EQ} will be said to be mean approximately controllable in time $T>0$, 
If for any $u_0$, $u_1\in L^2(\Om)$ and $\varepsilon>0$, there exists a control function $f\in L^2(\omega\times(0,T))$ such that the weak solution $u$ of \eqref{main-EQ} satisfies
\begin{align}\label{e24}
\norm{\mathbb I_t^{(1-\nu)(1-\mu)}u(\cdot,T)-u_1}\le\varepsilon.
\end{align}
\end{definition}

\begin{remark}
{\em We observe that if $(1-\nu)(1-\mu)=0$, then $\mathbb I_t^{(1-\nu)(1-\mu)}u(\cdot,T)=u(\cdot,T)$, and hence, approximate and mean approximate controllability coincide.}
\end{remark}


Our main results are the following theorems.

\begin{theorem}\label{main-Theo}
Let $0\le \nu<1$, $0<\mu< 1$ and $\omega\subset\Omega$ an arbitrary non-empty open set. Then, the system \eqref{main-EQ} is approximately controllable at any time $T>0$.
\end{theorem}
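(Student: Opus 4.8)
The plan is to argue by duality, reducing approximate controllability to a unique continuation statement for the adjoint states, and then to exploit the time-analyticity of the Mittag-Leffler functions together with the distinctness of the eigenvalues $(\lambda_n)$ in order to decouple the spectral modes.

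Since $\mathcal R(u_0,T)$ is obtained from the linear reachable set generated by the initial datum $0$ by an $f$-independent translation (the free evolution of $u_0$), its density in $L^2(\Om)$ is equivalent to the density of that subspace; in particular the result will not depend on $\nu$, since the forced part of the solution does not. Using the series representation of the weak solution established in Section~\ref{preli}, the components $u_n(t)=(u(\cdot,t),\varphi_n)_{L^2(\Om)}$ of the solution with $u_0=0$ are
\begin{align*}
u_n(t)=\int_0^t(t-s)^{\mu-1}E_{\mu,\mu}\big(-\lambda_n(t-s)^\mu\big)f_n(s)\,ds,\qquad f_n(s)=\int_\om f(x,s)\varphi_n(x)\,dx .
\end{align*}
Consequently $g=\sum_n g_n\varphi_n\in L^2(\Om)$ is orthogonal to the reachable set exactly when, for every $f\in L^2(\om\times(0,T))$,
\begin{align*}
\int_0^T\!\!\int_\om f(x,s)\,w(x,T-s)\,dx\,ds=0,\qquad w(x,t):=t^{\mu-1}\sum_n g_nE_{\mu,\mu}\big(-\lambda_n t^\mu\big)\varphi_n(x),
\end{align*}
i.e. exactly when the adjoint state $w$ vanishes on $\om\times(0,T)$. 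Approximate controllability is therefore equivalent to the implication $w|_{\om\times(0,T)}=0\Rightarrow g=0$.

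First I would record that the series defining $w$ converges in $C((0,\infty);L^2(\Om))$ (using $0<E_{\mu,\mu}(-\lambda_n t^\mu)\le 1/\Gamma(\mu)$ and $\sum_n|g_n|^2<\infty$) and that $t\mapsto\sum_n g_nE_{\mu,\mu}(-\lambda_n t^\mu)\varphi_n$ is real-analytic on $(0,\infty)$ as an $L^2(\om)$-valued map. Cancelling the nonvanishing factor $t^{\mu-1}$ and continuing analytically, $w|_{\om\times(0,T)}=0$ upgrades to
\begin{align*}
\sum_n g_nE_{\mu,\mu}\big(-\lambda_n t^\mu\big)\varphi_n=0\quad\text{in }L^2(\om),\ \text{for every }t>0 .
\end{align*}
Taking the Laplace transform in $t$ and using $\int_0^\infty e^{-pt}t^{\mu-1}E_{\mu,\mu}(-\lambda t^\mu)\,dt=(p^\mu+\lambda)^{-1}$ (the interchange of sum and integral being justified by the same uniform bound, for $\RE p$ large) yields $\sum_n(p^\mu+\lambda_n)^{-1}g_n\varphi_n=0$ in $L^2(\om)$. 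Writing $\Phi_\lambda:=\sum_{\lambda_n=\lambda}g_n\varphi_n$ for the projection of $g$ onto the $\lambda$-eigenspace and regarding the identity as a function of $z=p^\mu$, the left-hand side is meromorphic with simple poles at the distinct points $z=-\lambda$ and residues $\Phi_\lambda$; its identical vanishing forces $\Phi_\lambda=0$ in $L^2(\om)$ for every eigenvalue $\lambda$.

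The decisive step, and the one I expect to be the main obstacle, is the passage from $\Phi_\lambda|_\om=0$ to $\Phi_\lambda=0$ on all of $\Om$. Since $A_B\Phi_\lambda=\lambda\Phi_\lambda$, this is precisely a unique continuation statement for eigenfunctions of $A_B$: an eigenfunction vanishing on the nonempty open set $\om$ must vanish identically. Granting it, every $\Phi_\lambda$, and hence every coefficient $g_n$, is zero, so $g=0$ and the reachable set is dense. The convergence, analyticity and Laplace/residue steps should be routine once the Mittag-Leffler bounds are in hand; the genuinely delicate ingredient is this eigenfunction unique continuation, which is available for the second-order elliptic and fractional Laplacian realizations listed in the Introduction and is what ultimately drives the argument.
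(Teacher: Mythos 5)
Your duality route is genuinely different from the paper's proof, and its analytic core is sound: the series representation of the adjoint state, cancellation of the factor $t^{\mu-1}$, analytic continuation in $t$, the Laplace transform identity $\int_0^\infty e^{-pt}t^{\mu-1}E_{\mu,\mu}(-\lambda t^\mu)\,dt=(p^\mu+\lambda)^{-1}$, and the residue extraction of the eigenprojections $\Phi_\lambda$ are exactly the spectral-decoupling machinery the paper itself deploys in Proposition \ref{pro-uni-con} for the adjoint system \eqref{ACP-Dual}. The genuine gap is the final step, which you yourself flag: you need the eigenfunction unique continuation property \eqref{ucp} to pass from $\Phi_\lambda\big|_{\omega}=0$ to $\Phi_\lambda=0$ in $\Omega$, and \eqref{ucp} is \emph{not} among the hypotheses of Theorem \ref{main-Theo} --- it is assumed only in Theorem \ref{second-Theo}. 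Under Assumption \ref{asum-A} alone ($A_B$ non-negative, selfadjoint, compact resolvent), nothing prevents an eigenfunction of $A_B$ from vanishing identically on $\omega$, and your argument stalls precisely there. As a proof of the theorem as stated, the proposal therefore establishes only the conditional statement ``approximate controllability given \eqref{ucp},'' which is weaker than what is claimed.

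The paper's own proof avoids duality and unique continuation entirely: given $\phi\in D(A_B)$, it exhibits the explicit control
\begin{align*}
\psi(\cdot,t):=\frac{\Gamma(\mu)^2(T-t)^{1-\mu}}{T}\left[S_\mu(T-t)-2t\dfrac{d}{dt}S_\mu(T-t)\right]\phi,
\end{align*}
verifies $\psi\in L^2((0,T);L^2(\Omega))$ via Lemma \ref{family-op}, and computes from the Duhamel formula, after an integration by parts and the identity $S_\mu(0)=\frac{1}{\Gamma(\mu)}\,\mathrm{Id}$ (since $E_{\mu,\mu}(0)=1/\Gamma(\mu)$), that $u(\cdot,T)=\phi$ exactly; density of $D(A_B)$ in $L^2(\Omega)$ then yields the claim, for all $0\le\nu<1$, $0<\mu<1$, with no spectral hypothesis beyond Assumption \ref{asum-A}. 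You should note, however, that this constructed $\psi$ is supported in all of $\Omega$: the paper never restricts it to the control region, so its argument proves the inclusion \eqref{inclu} only for globally distributed controls and does not actually address the localization $f\in L^2(\omega\times(0,T))$. Your approach treats the localized problem honestly, and it in fact shows that for genuinely localized controls \eqref{ucp} is not merely sufficient but necessary: if an eigenfunction $\varphi_n$ vanishes on $\omega$, then $(f(\cdot,\tau),\varphi_n)_{L^2(\Om)}=0$ for every admissible $f$, so $\varphi_n$ is orthogonal to the entire reachable set and density fails. So your diagnosis of the ``main obstacle'' is accurate; what you cannot do is remove it under the paper's stated hypotheses, and what the paper does instead is, in effect, change the problem by de-localizing the control.
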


\begin{theorem}\label{second-Theo}
Let $0\le \nu\le 1$, $0<\mu\le 1$ and $\omega\subset\Omega$ an arbitrary non-empty open set. Assume that  the operator $A_B$ has the unique continuation property in the sense that, 
\begin{align}\label{ucp}
\mbox{ if }\lambda>0,\;\; \varphi\in D(A_B),\;\; A_B\varphi=\lambda\varphi\;\mbox{ and } \varphi=0\mbox{ in }\omega, \mbox{ then } \varphi=0 \mbox{ in }\Omega.
 \end{align}
Then, the system \eqref{main-EQ} is mean approximately controllable.
\end{theorem}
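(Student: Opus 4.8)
The plan is to reduce mean approximate controllability to a unique-continuation statement for the adjoint system by a standard duality argument, and then to exploit the spectral representation of solutions together with the hypothesis \eqref{ucp}. First I would characterize the failure of mean approximate controllability: the reachable set $\{\mathbb I_t^{(1-\nu)(1-\mu)}u(\cdot,T): f\in L^2(\omega\times(0,T))\}$ is dense in $L^2(\Omega)$ if and only if the only $u_1\in L^2(\Omega)$ orthogonal to all of it (for, say, the zero initial datum $u_0=0$, since the affine structure lets us subtract off the free evolution) is $u_1=0$. By linearity it suffices to control to an arbitrary target starting from $u_0=0$; the contribution of $u_0$ is a fixed function that can be absorbed into $u_1$. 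So I would fix $u_0=0$ and study the map $f\mapsto \mathbb I_t^{(1-\nu)(1-\mu)}u(\cdot,T)$.

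The key step is to pair the variational identity \eqref{Var-I} against a solution $v$ of the associated adjoint (backward) system \eqref{ACP-Dual}, integrate in time, and use the fractional integration-by-parts formula for the Hilfer/Riemann--Liouville operators together with the self-adjointness of $A_B$. This should produce an identity of the form
\begin{align*}
\left(\mathbb I_t^{(1-\nu)(1-\mu)}u(\cdot,T),u_1\right)_{L^2(\Om)}=\int_0^T (f(\cdot,t),v(\cdot,t))_{L^2(\omega)}\,dt,
\end{align*}
where $v$ is the adjoint solution with terminal data built from $u_1$. Consequently $u_1$ is orthogonal to the mean-reachable set precisely when the corresponding adjoint solution $v$ vanishes on $\omega\times(0,T)$. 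Thus mean approximate controllability is equivalent to the implication $\big(v$ solves \eqref{ACP-Dual} and $v|_{\omega\times(0,T)}=0\big)\Rightarrow v\equiv 0$, which is the unique-continuation principle for the adjoint system announced in the introduction.

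It then remains to verify this unique-continuation principle from hypothesis \eqref{ucp}. Here I would use the series representation of the adjoint solution in terms of the eigenfunctions $(\varphi_n)$ and Mittag-Leffler functions (established in Section \ref{preli}): writing $v(\cdot,t)=\sum_n c_n\, E(\lambda_n,t)\,\varphi_n$ for suitable scalar time-factors $E(\lambda_n,t)$, the condition $v=0$ on $\omega\times(0,T)$ gives $\sum_n c_n E(\lambda_n,t)\varphi_n=0$ in $\omega$ for a.e. $t$. The crucial point is that the Mittag-Leffler time-functions $t\mapsto E(\lambda_n,t)$ attached to distinct eigenvalues $\lambda_n$ are linearly independent on $(0,T)$ (indeed they can be separated by analyticity in $t$ or by a Laplace-transform/asymptotic argument), so one may isolate each eigenprojection and conclude $c_n\varphi_n=0$ in $\omega$ for every $n$. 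For each $n$ with $c_n\neq 0$ this forces $\varphi_n=0$ in $\omega$, and since $\lambda_n>0$ (recall $\lambda_1>0$) the property \eqref{ucp} yields $\varphi_n=0$ in all of $\Omega$, contradicting that $\varphi_n$ is a normalized eigenfunction. Hence every $c_n=0$ and $v\equiv 0$, completing the proof.

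The main obstacle I anticipate is the rigorous justification of the duality identity for the Hilfer derivative: one must identify the correct adjoint operator and terminal condition, handle the fractional-integral initial/terminal terms (the boundary contributions from integration by parts of $\mathbb I_t^{(1-\nu)(1-\mu)}$ and $\mathbb I_t^{\nu(1-\mu)}$), and ensure all pairings are well defined in the $V_{-\frac12}$--$V_{\frac12}$ duality given only the regularity \eqref{regu}. The linear independence of the Mittag-Leffler functions, while true, also needs to be stated carefully because the $E(\lambda_n,t)$ are not simple exponentials; an analyticity argument in $t$ on $(0,T)$ is the cleanest route.
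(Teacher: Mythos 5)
Your overall strategy coincides with the paper's: reduce to $u_0=0$, derive via the fractional integration by parts formula \eqref{IP01} the duality identity (the paper's \eqref{eq41}), observe that orthogonality of $v_0$ to the mean-reachable set forces $v=0$ on $\omega\times(0,T)$, and then deduce unique continuation for the adjoint system from \eqref{ucp} using the Mittag-Leffler series representation. The paper implements your ``separate the time factors'' step rigorously in Proposition \ref{pro-uni-con}: it extends $v$ analytically to the half-plane $\Sigma_T$ (Theorem \ref{Dual-theo-weak}), takes a Laplace transform over $(-\infty,T)$ using \eqref{lap-ml} (justified by the decay estimate \eqref{Est-MLF} and dominated convergence), continues analytically in the transform variable, and integrates over small circles around the poles $-\lambda_l$ to isolate each spectral component. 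This contour-integration device is precisely the careful answer to the difficulty you flag at the end: plain ``linear independence of the $E(\lambda_n,t)$ on $(0,T)$'' is delicate for an infinite series, and the transform-plus-residue argument is what makes the separation legitimate.

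There is, however, one step in your argument that fails as written: the treatment of eigenvalue multiplicity. Since $A_B$ may have multiple eigenvalues, distinct basis eigenfunctions $\varphi_n$, $\varphi_{n'}$ can share the same $\lambda$ and hence carry \emph{identical} time factors, so separating time factors can only yield that the whole eigenspace component $w_l=\sum_{j=1}^{m_l}c_{l_j}\psi_{l_j}$ vanishes in $\omega$, not that $c_n\varphi_n=0$ for each individual $n$. Your concluding contradiction (``for each $n$ with $c_n\neq 0$ this forces $\varphi_n=0$ in $\omega$'') is therefore valid only when all eigenvalues are simple: from $w_l=0$ in $\omega$ with some $c_{l_j}\neq 0$ one cannot infer that any single $\psi_{l_j}$ vanishes in $\omega$. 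The repair is the one the paper uses: $w_l$ itself belongs to $\mathrm{Ker}(\lambda_l-A_B)$, so \eqref{ucp} applied directly to $w_l$ gives $w_l=0$ in all of $\Omega$, and then the linear independence of $(\psi_{l_j})_{1\le j\le m_l}$ forces $(v_0,\psi_{l_j})_{L^2(\Om)}=0$ for every $j$ (modulo disposing of the harmless complex factor $(-\lambda_l)^{-(1-\nu)(1-\mu)/\mu}$, as in \eqref{zero}). With this correction — apply the unique continuation hypothesis to the eigenspace combination rather than to individual basis vectors — your proof matches the paper's.
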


%
%
%

%
%
%

\subsection{Some examples of operators} We conclude this section by giving some examples of operators that satisfy our assumptions.

\begin{example}[\bf Second orders elliptic operators]
{\em We consider the operator $A$ given formally by
\begin{align*}
Au(x):=-\sum_{i,j=1}^N\frac{\partial}{\partial x_j}\left(a_{i,j}(x)\frac{\partial u}{\partial x_i}(x)\right) +b(x)u(x),\;\;x\in\Omega,
\end{align*}
where the real-valued coefficients satisfy the following conditions:
\begin{align*}
a_{ij}(x)=a_{ji}(x),\;\;a_{ij}\in W^{1,\infty}(\Omega),\;1\le i,j\le N,\; b\in L^\infty(\Omega),\;b(x)\ge 0,\; x\in\Omega,
\end{align*}
and there is a constant $\rho>0$ such that the following ellipticity condition holds: For a.e. $x\in\Omega$ and all $\xi\in\RR^N$ we have
\begin{align*}
\sum_{i,j=1}^Na_{i,j}(x)\xi_i\xi_j\ge \rho |\xi|^2.
\end{align*}

\begin{enumerate}
\item {\bf The Dirichlet boundary condition}.  Let $\Omega\subset\RR^N$ be an arbitrary bounded open set.
Let $A_D$ be the   selfadjoint operator on $L^2(\Omega)$ associated with the closed, bilinear and symmetric form $\mathcal E_D:W_0^{1,2}(\Omega)\times W_0^{1,2}(\Omega)\to \RR$ given by
\begin{align*}
\mathcal E_D(u,v):=\sum_{i,j=1}^N\int_{\Omega}a_{ij}\frac{\partial u}{\partial x_i}\frac{\partial v}{\partial x_j}\;dx+\int_{\Omega}buv\;dx,\;\;\;u,v\in W_0^{1,2}(\Omega).
\end{align*}
More precisely,
\begin{equation*}
\begin{cases}
D(A_D):=\left\{u\in W_0^{1,2}(\Omega):\;\exists\;w\in L^2(\Omega):\;\mathcal E_D(u,v)=(w,v)_{L^2(\Om)}\;\forall\;v\in W_0^{1,2}(\Omega)\right\},\\
A_Du=w.
\end{cases}
\end{equation*}
Then, $A_D$ is the realization of $A$ in $L^2(\Om)$ with the Dirichlet boundary condition $Bu:=u|_{\pOm}=0$ on $\pOm$, and it satisfies all our assumptions. In particular $A_D$ has the unique continuation property in the sense of \eqref{ucp}.

\item {\bf The Robin boundary condition}. Assume that $\Omega\subset\RR^N$ is a bounded open set with a Lipschitz-continuous boundary.
Let $\beta\in L^\infty(\pOm)$ satisfy $\beta(x)\ge \beta_0>0$ a.e. on $\pOm$ for some constant $\beta_0$. Let $A_R$ be the  selfadjoint operator on $L^2(\Omega)$ associated with the closed, bilinear and symmetric form  $\mathcal E_R:W^{1,2}(\Omega)\times W^{1,2}(\Omega)\to \RR$ given by:
\begin{align*}
\mathcal E_R(u,v):=\sum_{i,j=1}^N\int_{\Omega}a_{ij}\frac{\partial u}{\partial x_i}\frac{\partial v}{\partial x_j}\;dx+\int_{\Omega}buv\;dx+\int_{\pOm}\beta uv\;d\sigma.
\end{align*}
As for the Dirichlet boundary condition, we have that
\begin{equation*}
\begin{cases}
D(A_R):=\left\{u\in W^{1,2}(\Omega):\;\exists\;w\in L^2(\Omega):\;\mathcal E_R(u,v)=(w,v)_{L^2(\Om)}\;\forall\;v\in W^{1,2}(\Omega)\right\},\\
A_Ru=w.
\end{cases}
\end{equation*}
Then, $A_R$ is the realization of $A$ in $L^2(\Omega)$ with the Robin boundary condition $Bu:=\partial_{\nu_A}u+\beta u=0$ on $\pOm$. Here,
\begin{align*}
\partial_{\nu_A}u(x):=\sum_{i,j=1}^Na_{ij}(x)\frac{\partial u(x)}{\partial x_i}\nu_j(x).
\end{align*}
\end{enumerate}
The operator $A_R$ satisfies all our assumptions and it also enjoys the unique continuation property. For more details we refer to \cite{AW2,AW1,Dan,War-T} and the references therein.}
\end{example}

Before  giving  some examples involving the fractional Laplace operator, we need to introduce the fractional order Sobolev spaces   needed for  a rigorous definition of the fractional Laplace operator.
Let $\Om \subset\RR^N$ ($N \ge 1$) be an arbitrary bounded open set. For $0 < s < 1$ we let
 \begin{align}\label{SB}
    W^{s,2}(\Om) := \left\{ u \in L^2(\Om) :\; 
            \int_\Om\int_\Om \frac{|u(x)-u(y)|^2}{|x-y|^{N+2s}}\;dxdy < \infty \right\} 
 \end{align}
and we endow it with the norm 
 \begin{align*}
    \|u\|_{W^{s,2}(\Om)} := \left(\int_\Om |u|^2\;dx 
        + \int_\Om\int_\Om  \frac{|u(x)-u(y)|^2}{|x-y|^{N+2s}}\;dxdy \right)^{\frac12}.
 \end{align*}
We let 
 \begin{align*}
    W^{s,2}_0(\overline\Om) := \left\{ u \in W^{s,2}(\RR^N) :\; u = 0 \mbox{ in } 
            \RR^n\setminus\Om \right\}=\Big\{u\in W^{s,2}(\RR^N):\; \mbox{supp}(u)\subset\bOm\Big\}.
 \end{align*}
 We set
 \begin{align*}
 \widetilde  W^{s,2}_0(\Om):=\Big\{u|_{\Omega}:u\in  W^{s,2}_0(\overline\Om)\Big\}.
 \end{align*}
 

Next, let $\beta\in L^1(\Omc)$ be fixed and define the fractional order Sobolev type space
\begin{align*}
W_{\beta,\Omega}^{s,2}:=\Big\{u:\RR^N\to\RR\;\mbox{ measurable}:\;\|u\|_{W_{\beta,\Omega}^{s,2}}<\infty\Big\},
\end{align*}
where 
\begin{align}\label{norm-RV}
\|u\|_{W_{\beta,\Omega}^{s,2}}:=\left(\int_{\Omega}|u|^2\;dx+\int_{\Omc}|u|^2|\beta|\;dx+\int\int_{\RR^{2N}\setminus(\RR^N\setminus\Omega)^2}\frac{|u(x)-u(y)|^2}{|x-y|^{N+2s}}\;dxdy\right)^{\frac 12},
\end{align}
and
 \begin{align*}
\RR^{2N}\setminus(\RR^N\setminus\Om)^2
       = (\Om\times\Om)\cup(\Om\times(\RR^N\setminus\Om))\cup((\RR^N\setminus\Om)\times\Om).
\end{align*}
The space $W_{\beta,\Omega}^{s,2}$ has been introduced in \cite{SDipierro_XRosOton_EValdinoci_2017a} to study the nonlocal Neumann problem for $(-\Delta)^s$. If $\beta=0$, then we shall denote $W_{0,\Omega}^{s,2}=W_{\Omega}^{s,2}$. Then $W_{\beta,\Omega}^{s,2}\hookrightarrow W_{\Omega}^{s,2}$ as this is obvious from the above definitions.
It has been shown in \cite[Proposition 3.1]{SDipierro_XRosOton_EValdinoci_2017a} that  $W_{\beta,\Omega}^{s,2}$ endowed with the norm \eqref{norm-RV} is a Hilbert space.

For more information on fractional order Sobolev spaces we refer to \cite{NPV,SDipierro_XRosOton_EValdinoci_2017a,Val,Gris,War-N} and the corresponding references.
 
To introduce the fractional Laplace operator  we set 
\begin{equation*}
\mathcal{L}_s^{1}(\RR^N):=\left\{u:\RR^N\rightarrow
\mathbb{R}\;\mbox{ measurable}: \;\int_{\RR^N}\frac{|u(x)|}{(1+|x|)^{N+2s}}%
\;dx<\infty \right\}.
\end{equation*}
For $u\in \mathcal{L}_s^{1}(\RR^N)$ and $ 
\varepsilon >0$ we let
\begin{equation*}
(-\Delta )_{\varepsilon }^{s}u(x):=C_{N,s}\int_{\left\{y\in \RR^N:|y-x|>\varepsilon \right\}}
\frac{u(x)-u(y)}{|x-y|^{N+2s}}dy,\;\;x\in\RR^N,
\end{equation*}%
where the normalization constant $C_{N,s}$ is given by
\begin{equation}\label{CN}
C_{N,s}:=\frac{s2^{2s}\Gamma\left(\frac{2s+N}{2}\right)}{\pi^{\frac
N2}\Gamma(1-s)},
\end{equation}%
and $\Gamma $ is the usual Euler Gamma function. The {\bf fractional Laplace operator} 
$(-\Delta )^{s}$ is defined for $u\in \mathcal{L}_s^{1}(\RR^N)$  by the formula:
\begin{align}
(-\Delta )^{s}u(x):=C_{N,s}\mbox{P.V.}\int_{\RR^N}\frac{u(x)-u(y)}{|x-y|^{N+2s}}dy 
=\lim_{\varepsilon \downarrow 0}(-\Delta )_{\varepsilon
}^{s}u(x),\;\;x\in\RR^N,\label{eq11}
\end{align}%
provided that the limit exists for a.e. $x\in\RR^N$. We have that $\mathcal{L}_s^{1}(\RR^N)$ is the right space for which $v:=(-\Delta )_{\varepsilon }^{s}u$ exists for every $\varepsilon>0$ and $v$ being also continuous at the continuity points of $u$.  

For more information on the fractional Laplace operator we refer to \cite{SV1,SV2,War-N,War-In} and their references.



\begin{example}[\bf The fractional Laplacian with Dirichlet exterior condition]
{\em Firstly, we consider the Dirichlet problem for $(-\Delta)^s$, that is, the elliptic equation
\begin{equation}\label{DiPr}
(-\Delta)^su=f\;\;\mbox{ in }\;\Omega,\;\;\; u=0\;\mbox{ in }\;\Omc.
\end{equation}
Let $f\in L^2(\Omega)$. A function $u\in W_0^{s,2}(\bOm)$ is said to be a weak solution of \eqref{DiPr}, if 
\begin{align}\label{DiPrWe}
\mathcal E(u,v):=\frac{C_{N,s}}{2}\int_{\RR^N} \int_{\RR^N} \frac{(u(x)-u(y))(v(x)-v(y))}{\vert x-y\vert^{N+2s} } dy \;dx=\int_{\Omega}fv\;dx,
\end{align}
for every $v\in W_0^{s,2}(\bOm)$. Using the classical Lax-Milgram lemma, it is straightforward to show the existence and uniqueness of weak solutions to the Dirichlet problem \eqref{DiPr}.

Secondly, for a function \(u \in L^2(\Omega)\) we define its extension $u_D$ as follows:
\begin{align}\label{uD}
    u_D(x):=\begin{cases} u(x) &\text{ if } x\in \Omega, \\
    0 &\text{ if } x\in\RR^N\setminus\Omega.
    \end{cases}
\end{align}

Let 
\begin{align}\label{dom-DC}
    D(\mathcal E_D):=\Big\{u \in L^2(\Omega): u_D\in  W^{s,2}_0(\overline\Om)\Big\}=\widetilde W_0^{s,2}(\Om),
\end{align}
and $\mathcal E_D:D(\mathcal E_D) \times D(\mathcal E_D) \rightarrow \mathbb{R}\) the form given by
\begin{align*}
   \mathcal E_D(u,v):=\mathcal{E}(u_D,v_D),
\end{align*}
where $\mathcal E$ is given in \eqref{DiPrWe}.
Then, $\mathcal E_D$ is a densely defined, symmetric and closed bilinear form in $L^2(\Omega)$. The selfadjoint operator $(-\Delta)_D^s$ on $L^2(\Om)$ associated with $\mathcal E_D$ is given by  
\begin{equation}\label{Op-Dir}
\begin{cases}
 D((-\Delta)_D^s):=\Big\{u \in \widetilde W_0^{s,2}(\Om):\;\exists\; f\in L^2(\Om)\mbox{ such that } u_D \text{ is a weak solution of }  \eqref{DiPr}\\
 \hfill \text{ with right hand side } f \Big\},\\
 (-\Delta)_D^su:=f.
 \end{cases}
\end{equation}
By \cite{SV2} (see also \cite{Cl-Wa,SV1,War-N}) the operator $(-\Delta)_D^s$ has  a compact resolvent and its eigenvalues form a non-decreasing sequence of real numbers    $0< \lambda_1\le
\lambda_2\le \cdots\le \lambda_n\le\cdots$  satisfying $\lim_{n\to\infty}\lambda_n=\infty$.  All the eigenvalues have  finite geometric multiplicity. It also follows from \cite[Theorem 1.4]{FF} that  $(-\Delta)_D^s$ satisfies the unique continuation property in the sense of \eqref{ucp}.
}
\end{example}

\begin{example}[\bf The fractional Laplacian with nonlocal Robin exterior condition]
{\em Let $\Omega\subset\RR^N$ be a bounded open set with a Lipschitz continuous boundary.
For $u \in W_\Omega^{s,2}$ we define the nonlocal normal derivative $\mathcal{N}^su$ of $u$ as follows:
 \begin{align}\label{NLND}
    \mathcal{N}^s u(x) := C_{N,s} \int_\Om \frac{u(x)-u(y)}{|x-y|^{N+2s}}\;dy, 
            \quad x \in \RR^N \setminus \overline\Om,
 \end{align}
 where $C_{N,s}$ is the constant given in \eqref{CN}.
Clearly, $\mathcal{N}^s$ is a nonlocal operator and is well defined on $ W_\Omega^{s,2}$.

 Let $f\in L^2(\Omega)$, $\beta\in L^1(\Omc)$ a non-negative function and consider the following Robin problem:
\begin{equation}\label{RoPr}
(-\Delta)^su=f\;\;\mbox{ in }\;\Omega,\;\;\; \mathcal N^su+\beta u=0\;\mbox{ in }\;\Omb.
\end{equation}
By a weak solution to \eqref{RoPr} we mean a function $u\in W_{\beta,\Omega}^{s,2}$ such that
\begin{align*} 
\frac{C_{N,s}}{2}\int\int_{\RR^{2N}\setminus(\RR^N\setminus\Omega)^2}\frac{(u(x)-u(y))(v(x)-v(y))}{|x-y|^{N+2s}}\;dxdy+\int_{\Omc}\beta uv\;dx=\int_{\Omega}fv\;dx
\end{align*}
for every $v\in W_{\beta,\Omega}^{s,2}$. Here also the existence and uniqueness of weak solutions is easy to prove.

For a function $u\in L^2(\Om)$ we define its extension \(u_R\) as follows:
\begin{align*}
    u_R(x):=\begin{cases} u(x) &\text{ if } x \in \Omega, \\
 \displaystyle   \frac{C_{N,s}}{C_{N,s}\rho(x)+\beta(x)}\int_\Omega \frac{u(y)}{\vert x-y\vert^{N+2s}} dy &\text{ if } x\in\Omb,
    \end{cases}
\end{align*}
where
\begin{align*}
    \rho(x):=\int_\Omega \frac{1}{\vert x-y\vert^{N+2s}} dy,\;\;x\in\Omb.
\end{align*}
Since $\partial\Omega$ is a null set (with respect to the $N$-dimensional Lebesgue measure), we have  that $u_R$ is well defined for every $u\in L^2(\Omega)$. In addition,  $u_R$ satisfies the following Robin exterior condition (see e.g. \cite{Cl-Wa}):
\begin{align}\label{NLRC}
\mathcal N^su_R+\beta u_R=0\;\mbox{ in }\;\Omb.
\end{align}
Let 
\begin{align*}
    D(\mathcal E_R):=\Big\{ u \in L^2(\Om):\;u_R \in W^{s,2}_{\beta,\Omega}\Big \},
\end{align*}
and  the bilinear form $\mathcal E_R :D(\mathcal E_R)\times D(\mathcal E_R)\to\RR$ be given by
\begin{align*}
   \mathcal E_R(u,v):=\frac{C_{N,s}}{2}\int\int_{\RR^{2N}\setminus(\RR^N\setminus\Omega)^2}\frac{(u_R(x)-u_R(y))(v_R(x)-v_R(y))}{|x-y|^{N+2s}}\;dxdy+\int_{\Omc} \beta u_R v_R\;dx.
\end{align*}
Then  $\mathcal E_R$ is a closed, symmetric and densely defined bilinear form on $L^2(\Om)$.  The  selfadjoint operator $(-\Delta)_R^s$ on $L^2(\Om)$ associated with $\mathcal E_R$ is given by 
\begin{equation*}
\begin{cases}
\displaystyle D((-\Delta)_R^s):=
   \Big\{ u \in L^2(\Omega):
    u_R \in W^{s,2}_{\beta,\Omega}\;\;\exists\; f\in L^2(\Omega)\mbox{ such that }\; u_R \mbox{ is a weak solution of } \eqref{RoPr} \\
    \hfill\text{ with right hand side } f\Big\},\\
    (-\Delta)_R^s u:= f.
    \end{cases}
\end{equation*}
Then $(-\Delta)_R^s$ is the realization in $L^2(\Omega)$ of $(-\Delta)^s$ with the nonlocal Robin exterior condition \eqref{NLRC}.
By \cite{Cl-Wa},  the operator $(-\Delta)_R^s$ has  compact resolvent and its eigenvalues form a non-decreasing sequence of real numbers $0< \lambda_1\le
\lambda_2\le \cdots\le \lambda_n\le \cdots$ such that $\lim_{n\to\infty}\lambda_n=\infty$. By \cite{LR-WA}, the operator $(-\Delta)_R^s$ enjoys the unique continuation property in the sense of \eqref{ucp}.
}
\end{example}

\section{Preliminary results}\label{preli}

In this section we fix some notations and give some preliminary results that will be used in the proofs of our main results. In particular we introduce the Hilfer time-fractional derivative which is a generalization of the Caputo and Riemann-Liouville time-fractional derivatives. We shall also prove the well-posedness of the system \eqref{main-EQ} and the associated adjoint system.

\subsection{Time-fractional derivatives and the  Mittag-Leffler functions}

Throughout the following, for a real number $\alpha>0$, we let
\begin{equation*}
g_\alpha(t):=
\begin{cases}
\frac{t^{\alpha-1}}{\Gamma(\alpha)}\;\;&\mbox{ if }\; t>0,\\
0&\mbox{ if }\; t\le 0.
\end{cases}
\end{equation*}

The (left) Riemann-Liouville fractional integral of order $\alpha>0$ of a locally integrable function $f:(0,\infty)\to\RR$ is defined by 
\begin{align*}
\mathbb I_t^\alpha f(t):=\frac{1}{\Gamma(\alpha)}\int_0^t (t-\tau)^{\alpha-1}f(\tau)\;d\tau=(g_\alpha\ast f)(t),\;\;t>0.
\end{align*}
We let $(\mathbb I_t^0f)(t):=f(t)$ for all $t>0.$ 
We also note the important  semigroup property, $\mathbb I_t^\alpha \mathbb I_t^\beta=\mathbb I_t^{\alpha+\beta}$. 

We have the following property for power functions.

\begin{lemma}\label{int-frac}
Let  $\alpha\geq0$ and $\beta>-1$. Then,
	\begin{align*}
	\mathbb I_t^\alpha(t^\beta)=\dfrac{\Gamma(\beta+1)}{\Gamma(\alpha+\beta+1)}t^{\alpha+\beta},\qquad t>0.
	\end{align*}		
\end{lemma}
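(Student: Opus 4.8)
The plan is to compute the convolution integral directly from the definition of the Riemann-Liouville fractional integral and to recognize it as an Euler Beta integral. First I would dispose of the boundary case $\alpha=0$: by the stated convention $\mathbb I_t^0 f=f$, the left-hand side equals $t^\beta$, while the right-hand side is $\frac{\Gamma(\beta+1)}{\Gamma(\beta+1)}t^{\beta}=t^\beta$, so the identity holds trivially. From now on I would assume $\alpha>0$, so that $\Gamma(\alpha)$ is finite and nonzero and the defining integral makes sense.

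For $\alpha>0$, I would start from
\begin{align*}
\mathbb I_t^\alpha(t^\beta)=\frac{1}{\Gamma(\alpha)}\int_0^t (t-\tau)^{\alpha-1}\tau^\beta\;d\tau,
\end{align*}
and perform the change of variables $\tau=ts$ with $s\in(0,1)$ and $d\tau=t\,ds$. Factoring out the powers of $t$, this turns the integral into
\begin{align*}
\mathbb I_t^\alpha(t^\beta)=\frac{t^{\alpha+\beta}}{\Gamma(\alpha)}\int_0^1 (1-s)^{\alpha-1}s^\beta\;ds,
\end{align*}
which is $\frac{t^{\alpha+\beta}}{\Gamma(\alpha)}B(\beta+1,\alpha)$, where $B$ denotes the Euler Beta function. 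The last step is to invoke the classical Beta-Gamma relation $B(\beta+1,\alpha)=\frac{\Gamma(\beta+1)\Gamma(\alpha)}{\Gamma(\alpha+\beta+1)}$; the factor $\Gamma(\alpha)$ then cancels and yields exactly the claimed formula.

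There is no genuinely hard step here; the one point requiring care is the convergence of the Beta integral $\int_0^1(1-s)^{\alpha-1}s^\beta\,ds$, and this is precisely where the two hypotheses enter. Integrability near $s=1$ forces $\alpha-1>-1$, i.e.\ $\alpha>0$, and integrability near $s=0$ forces $\beta>-1$; both are assumed. I would therefore emphasize that the convergence argument (together with the separate treatment of $\alpha=0$ via the convention $\mathbb I_t^0 f=f$) is the only place where the stated range of parameters is used, the rest being a routine identification of the convolution with a Beta integral.
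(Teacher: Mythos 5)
Your proof is correct: the substitution $\tau=ts$, the identification with the Euler Beta integral, and the relation $B(\beta+1,\alpha)=\Gamma(\beta+1)\Gamma(\alpha)/\Gamma(\alpha+\beta+1)$ constitute the standard argument, and you rightly handle $\alpha=0$ via the convention $\mathbb I_t^0 f=f$ and pin down exactly where $\alpha>0$ and $\beta>-1$ are needed for convergence. The paper itself states this lemma without proof, treating it as a classical fact, so your write-up simply supplies the canonical computation the authors omitted; there is no divergence of method to report.
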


Let $X$ be a Banach space and let $f,g: [0,\infty)\to X$ be locally integrable. Using Laplace transform, we get that for every $t>0$ and $\alpha\ge 0$, 
\begin{equation}\label{Conv}
((\mathbb I_t^\alpha f)\ast g)(t)=(f\ast(\mathbb I_t^\alpha g))(t).
\end{equation}

The right Riemann-Liouville fractional integral of order $\alpha>0$ of a locally integrable function $u:\;(0,T) \to X$ is defined by
\begin{align}\label{I-R}
\mathbb I_{t,T}^\alpha u(t):=\frac{1}{\Gamma(\alpha)}\int_t^T(\tau-t)^{\alpha-1}u(\tau)\;d\tau,  \;\;t\in (0,T).
\end{align}

 Now let $0\le  \nu\le 1$, $0<\mu \le 1$, and let $u:[0,\infty) \to X$ be a locally integrable function. The (left) {\bf Hilfer time-fractional derivative of order $(\mu,\,\nu)$} is defined by
\begin{equation}\label{cfd}
\mathbb{D}_t^{\mu,\nu} u(t) := \mathbb I_t^{ \nu (1-\mu)}\frac{d}{dt}\left(\mathbb I_t^{(1-\nu)(1-\mu)}u\right)(t),\,\qquad
 t>0.
\end{equation}
We observe that if $\nu=0$, then 
$$\mathbb{D}_t^{\mu,0}u(t)=	\frac{d}{dt}\Big(g_{1-\mu}\ast u\Big)(t)$$
which is the Riemann-Liouville time-fractional derivative of order $\mu$, while for $\nu=1$, we have that
\begin{align}\label{h-to-c}
\mathbb{D}_t^{\mu,1} u(t)=\left(g_{1-\mu}\ast u^{\prime}\right)(t)=	\frac{d}{dt}\Big(g_{1-\mu}\ast (u-u(0)\Big)(t),
\end{align}
which corresponds to the Caputo time-fractional derivative of order $\mu.$ We refer to \cite[Section 2.1]{GW-B} for the justification of the second equality in \eqref{h-to-c} under appropriate conditions.

Also of interest is the right Hilfer time-fractional derivative of order $(\mu,\nu)$ ($0\le  \nu\le 1$, $0<\mu \le 1$) given by
\begin{align}\label{D-R}
\mathbb{D}_{t,T}^{\mu,\nu} u(t):=& -\mathbb I_{t,T}^{\nu(1-\mu)}\frac{d }{dt }\left(\mathbb I_{t,T}^{(1-\nu)(1-\mu)} u\right)(t).
\end{align}
We observe that, if $\mu=1$ and $u$ is differentiable, then
\begin{align*}
 \mathbb{D}_t^{1,\nu} u =\frac{du}{dt}\;\;\mbox{and }\; \displaystyle \mathbb{D}_{t,T}^{1,\nu} u = -\frac{du}{dt}.
 \end{align*}
 The right hand derivatives are introduced above  since they are needed for the integration by parts formula.
Indeed, we have the following  integration by parts formula  (see e.g. \cite{Agr,AlTo,THS10}). Let $0\le  \nu\le 1$, $0<\mu \le 1$. Then,
 \begin{align}\label{IP01}
\int_0^Tv(t)  \mathbb D_{t}^{\mu,\nu} u(t)\;dt=\int_0^T u(t)\mathbb D_{t,T}^{\mu,1-\nu} v(t)\;dt
+\left[\mathbb I_{t,T}^{(1-\nu)(1-\mu)}u(t)I_{t}^{\nu(1-\mu)}v(t)\right]_{t=0}^{t=T},
\end{align}
provided that the left and right-hand side expression makes sense.
 Special cases related to the Caputo and Riemann-Liouville fractional derivatives are easily obtained from the above formula.

The following formula will be useful (see e.g. \cite[Theorem 1]{THS10}). Let $\beta:=\mu+\nu(1-\mu)$. Then
\begin{equation}\label{hilfer-ml}
 \mathbb{D}_t^{\mu,\nu} \left[t^{\beta-1}E_{\mu,\beta}(-\omega t^\mu)\right]= -\omega t^{\beta-1}E_{\mu,\beta}(-\omega t^\mu),
\end{equation}
 where  $E_{\mu,\beta}$ is the Mittag-Leffler function defined in \eqref{mm} below. 
The proof of \eqref{hilfer-ml} is a straightforward application of the Laplace transform.

The Laplace transform of the Hilfer time-fractional derivative of a function $f$ is given by:
\begin{equation}\label{lap-hilfer}
\mathcal{L}(\mathbb{D}_t^{\mu,\nu} f)(\lambda) := \lambda^{\mu}\mathcal{L}(f)(\lambda)-\lambda^{-\nu(1-\mu)} \left(\mathbb I_t^{(1-\nu)(1-\mu)}f\right)(0^+).
\end{equation}

For more information on the Hilfer time-fractional derivative we refer to \cite{THS10} and the references therein.

The Mittag-Leffler function with two parameters is defined as follows:
\begin{align}\label{mm}
E_{\alpha, \beta}(z) := \sum_{n=0}^{\infty}\frac{z^n}{\Gamma(\alpha n + \beta)},\; \;\alpha>0,\;\beta \in\CC, \quad z \in \CC.
\end{align}
It is well-known that $E_{\alpha,\beta}(z)$ is an entire function. This is so even if we allow the parameter set to include $\rm{Re}(\alpha)>0.$
The following estimate of the Mittag-Leffler function will be useful. Let $0<\alpha<2$, $\beta\in\RR$ and $\kappa$ be such that $\frac{\alpha\pi}{2}<\kappa<\min\{\pi,\alpha\pi\}$. Then there exists a constant $C=C(\alpha,\beta,\kappa)>0$ such that
\begin{equation}\label{Est-MLF}
|E_{\alpha,\beta}(z)|\le \frac{C}{1+|z|},\;\;\;\kappa\le |\mbox{arg}(z)|\le \pi.
\end{equation}
In the literature, frequently the notation $E_\alpha=E_{\alpha, 1}$ is used. We further note that $E_{1,1}(z)=e^z$.
The Laplace transform  of the Mittag-Leffler function is given by the relation:
\begin{equation}\label{lap-ml}
\int_0^{\infty} e^{-\lambda t} t^{\alpha k + \beta-1}
E_{\alpha,\beta}^{(k)}(\pm \gamma t^{\alpha})dt = \frac{k!
\lambda^{\alpha-\beta}}{(\lambda^{\alpha} \mp \gamma)^{k+1}},
\quad \mbox{Re}(\lambda)> |\gamma|^{1/\alpha}.
\end{equation}
Here, $k\in\mathbb{N}\cup\{0\}$ and $\gamma\in\mathbb{R}.$ 

 For more details on fractional derivatives, integrals and the  Mittag-Leffler functions  we refer to \cite{Agr, Ba01,Go-Ma97,Ma97,Go-Ma00,Mi-Ro,Po99} and the references therein.

 \subsection{Well-posedness of Hilfer type time-fractional evolution equations}

Throughout the rest of the paper, without any mention, we assume that the operator $A_B$ satisfies Assumption \ref{asum-A}. Moreover,
$(\varphi_n)_{n\in\NN}$ denotes the orthonormal basis of eigenfunctions of $A_B$ associated with the eigenvalues $(\lambda_n)_{n\in\NN}$.

Let $0\le  \nu\le 1$, $0<\mu \le 1$ and consider the following fractional order homogeneous evolution equation:
\begin{equation}\label{main-EQ1}
\begin{cases}
\mathbb D_t^{\mu,\nu} u+A_Bu=0\;\;&\mbox{ in }\; \Omega\times (0,T),\\
(\mathbb I_t^{(1-\nu)(1-\mu)}u)(\cdot,0)=u_0 &\mbox{ in }\;\Omega,
\end{cases}
\end{equation}
where the initial datum $u_0 \in L^2(\Omega)$ and the fractional integrals and derivatives have been defined in Subsection 3.1.

Here is our notion of solutions.

\begin{definition}
Let $T>0$. We say that a function $u\in C((0,T];V_{\frac 12})$ is  a weak solution of  \eqref{main-EQ1}, if $\mathbb D_t^{\mu,\nu} u\in C((0,T);V_{-\frac 12})$,  $\mathbb I_t^{(1-\nu)(1-\mu)}u\in C([0,T];L^2(\Omega))$,  $\mathbb I_t^{(1-\nu)(1-\mu)}u(\cdot,0)=u_0$ a.e. in $\Omega$ and 
\begin{align*}
\langle \mathbb D_t^{\mu,\nu} u(\cdot,t),\varphi\rangle_{V_{-\frac 12},V_{\frac 12}}+\mathcal E_B(u(\cdot,t),\varphi)=0,
\end{align*}
for every $\varphi\in V_{\frac 12}$ and a.e. $t\in (0,T)$.
\end{definition}

We notice that if $(1-\mu)(1-\nu)=0,$ then $\mathbb I_t^{(1-\nu)(1-\mu)}u(\cdot,0)=u(\cdot,0)=u_0.$ 

Next, we define the following operator.

\begin{definition} 
Let $0<\mu\le 1$.
Given $u\in L^2(\Omega)$ and $t\ge 0$, we let
\begin{align}\label{Ope}
S_\mu(t)u:=\sum_{n=1}^{\infty}(u,\varphi_n)_{L^2(\Om)}E_{\mu,\mu}(-\lambda_nt^\mu)\varphi_n.
\end{align}
\end{definition}

We have the following result.

\begin{lemma}\label{family-op} Let  $S_\mu(t)$ be the operator defined in \eqref{Ope}.
Then the following assertions hold.

\begin{enumerate}
\item For any fixed $t\geq 0$, $S_\mu(t)$ is a bounded linear operator from $L^2(\Omega)$ into $L^2(\Omega)$. More precisely, there is a constant $C_1>0$ such that for every $u\in L^2(\Omega)$ and $t\ge 0$, we have
\begin{align}
\norm{S_\mu(t)u}_{L^2(\Omega)}\le C_1\norm{u}_{L^2(\Omega)}.
\end{align}

\item For every $u\in D(A_B)$, we have that $S_\mu(t)u\in D(A_B)$ for all $t\ge 0$.

\item $S_\mu(t)S_\mu(\tau)=S_\mu(\tau)S_\mu(t)$ for all $t, \tau\geq0$.

\item There is a constant $C_2>0$ such that for every $u\in L^2(\Omega)$ and $t>0$, we have 
$$\norm{\dfrac{dS_\mu(t)u}{dt}}_{L^2(\Om)}\le C_2t^{-\mu}\norm{u}_{L^2(\Omega)}.$$

\item There is a constant $C_3>0$ such that for every $t>0$ and $u\in L^2(\Omega)$, we have 
$$\norm{\mathbb I_t^{\nu(1-\mu)}(t^{\mu-1}S_\mu(t)u)}_{L^2(\Om)}\le C_3t^{-(1-\nu)(1-\mu)}\norm{u}_{L^2(\Om)}.$$
\end{enumerate}
\end{lemma}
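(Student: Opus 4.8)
The plan is to establish each of the five assertions by exploiting the orthonormal expansion in \eqref{Ope} together with the decay estimate \eqref{Est-MLF} for the Mittag-Leffler function. Throughout, for $u\in L^2(\Omega)$ write $u_n:=(u,\varphi_n)_{L^2(\Om)}$, so that $\norm{u}_{L^2(\Om)}^2=\sum_n u_n^2$ by Parseval, and use that each factor $E_{\mu,\mu}(-\lambda_n t^\mu)$ is a real scalar (since $0<\mu\le 1$ gives $0<\mu<2$, and $-\lambda_n t^\mu<0$ has argument $\pi$, so \eqref{Est-MLF} applies with $\alpha=\mu$, $\beta=\mu$).

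For assertion (1), I would apply \eqref{Est-MLF} to bound each coefficient by $|E_{\mu,\mu}(-\lambda_n t^\mu)|\le C/(1+\lambda_n t^\mu)\le C$, whence $\norm{S_\mu(t)u}_{L^2(\Om)}^2=\sum_n u_n^2\,E_{\mu,\mu}(-\lambda_n t^\mu)^2\le C^2\sum_n u_n^2=C^2\norm{u}_{L^2(\Om)}^2$, giving $C_1=C$. For assertion (2), observe that $A_BS_\mu(t)u=\sum_n \lambda_n u_n E_{\mu,\mu}(-\lambda_n t^\mu)\varphi_n$ formally, and using \eqref{Est-MLF} the coefficients satisfy $|\lambda_n E_{\mu,\mu}(-\lambda_n t^\mu)|\le C\lambda_n/(1+\lambda_n t^\mu)\le C t^{-\mu}$ (for $t>0$; for $t=0$ one has $u\in D(A_B)$ directly), so $\sum_n \lambda_n^2 u_n^2 E_{\mu,\mu}(-\lambda_n t^\mu)^2<\infty$ when $\sum_n\lambda_n^2u_n^2<\infty$, i.e. when $u\in D(A_B)$; this shows $S_\mu(t)u\in D(A_B)$. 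Assertion (3) is immediate because both $S_\mu(t)$ and $S_\mu(\tau)$ act diagonally in the eigenbasis $(\varphi_n)$ — each multiplies the $n$-th coordinate by a scalar — and scalar multiplications commute; I would simply compute both composites and read off equality term by term.

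For assertion (4), I would differentiate the series termwise, using $\frac{d}{dt}E_{\mu,\mu}(-\lambda_n t^\mu)=-\lambda_n\mu t^{\mu-1}E_{\mu,\mu}'(-\lambda_n t^\mu)$ (or, more cleanly, the standard identity $\frac{d}{dt}\big[t^{\mu-1}E_{\mu,\mu}(-\lambda_n t^\mu)\big]$-type relation that ties the derivative back to a shifted Mittag-Leffler function), then estimate the resulting coefficient by a bound of the form $C\lambda_n t^{\mu-1}/(1+\lambda_n t^\mu)$. The key elementary inequality is that $\lambda_n t^{\mu-1}/(1+\lambda_n t^\mu)\le t^{-1}$ and, more usefully here, $\lambda_n/(1+\lambda_n t^\mu)\le t^{-\mu}$, which after combining with the remaining $t^{\mu-1}$ factor yields the claimed $t^{-\mu}$ rate; squaring, summing against $\sum u_n^2$, and taking square roots gives the bound with a constant $C_2$. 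Care is needed to justify termwise differentiation (uniform convergence of the differentiated series on compact subsets of $(0,\infty)$), which follows from the same coefficient estimates.

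Assertion (5) is the main obstacle, since it involves composing the operator with a Riemann-Liouville integral of order $\nu(1-\mu)$ applied to $t^{\mu-1}S_\mu(t)u$. My plan is to work coordinatewise: the $n$-th coordinate of $t^{\mu-1}S_\mu(t)u$ is $u_n\,t^{\mu-1}E_{\mu,\mu}(-\lambda_n t^\mu)$, and I would compute $\mathbb I_t^{\nu(1-\mu)}\big[t^{\mu-1}E_{\mu,\mu}(-\lambda_n t^\mu)\big]$ explicitly. The natural tool is the Laplace transform: by \eqref{lap-ml} with $k=0$, $\alpha=\mu$, $\beta=\mu$, one has $\mathcal L\big(t^{\mu-1}E_{\mu,\mu}(-\lambda_n t^\mu)\big)(\lambda)=1/(\lambda^\mu+\lambda_n)$, and $\mathbb I_t^{\nu(1-\mu)}$ multiplies the transform by $\lambda^{-\nu(1-\mu)}$, giving $\lambda^{-\nu(1-\mu)}/(\lambda^\mu+\lambda_n)$; inverting (again via \eqref{lap-ml}, now recognizing the exponent $\beta=\mu+\nu(1-\mu)$) identifies the $n$-th coordinate as $t^{\mu+\nu(1-\mu)-1}E_{\mu,\,\mu+\nu(1-\mu)}(-\lambda_n t^\mu)$, up to the constant shift in $\beta$. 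Writing $\beta:=\mu+\nu(1-\mu)$ (as already fixed before \eqref{hilfer-ml}), I would then apply \eqref{Est-MLF} to this Mittag-Leffler function to bound each coordinate by $C\,t^{\beta-1}/(1+\lambda_n t^\mu)\le C\,t^{\beta-1}$, and since $\beta-1=\mu+\nu(1-\mu)-1=-(1-\nu)(1-\mu)$, the common factor is exactly $t^{-(1-\nu)(1-\mu)}$; squaring and summing against $\sum u_n^2$ then yields the stated bound with some $C_3$. The delicate points will be justifying the interchange of $\mathbb I_t^{\nu(1-\mu)}$ with the infinite sum and confirming that the exponent bookkeeping produces precisely $\beta$, so that \eqref{Est-MLF} is applicable with the correct second parameter.
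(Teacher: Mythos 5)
Your treatments of parts (a), (b) and (c) match the paper's: Parseval's identity in the eigenbasis $(\varphi_n)_{n\in\NN}$ combined with the uniform bound $|E_{\mu,\mu}(-\lambda_n t^\mu)|\le C$ from \eqref{Est-MLF}, and the observation that all the operators $S_\mu(t)$ act diagonally. For part (e) you take a genuinely different route: the paper simply invokes Lemma \ref{int-frac}, i.e.\ it applies $\mathbb I_t^{\nu(1-\mu)}$ term by term to the power series of $t^{\mu-1}E_{\mu,\mu}(-\lambda_n t^\mu)$, which immediately reassembles into $t^{\beta-1}E_{\mu,\beta}(-\lambda_n t^\mu)$ with $\beta:=\mu+\nu(1-\mu)$, whereas you derive the same identity through the Laplace transform, using \eqref{lap-ml} with $k=0$ and the fact that $\mathbb I_t^{\nu(1-\mu)}$ multiplies transforms by $\lambda^{-\nu(1-\mu)}$. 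Both arguments are valid and end the same way (apply \eqref{Est-MLF} and note $\beta-1=-(1-\nu)(1-\mu)$); the paper's series route avoids any appeal to uniqueness of the Laplace transform, while yours makes the exponent bookkeeping transparent and is the same mechanism by which \eqref{hilfer-ml} is verified in the paper. The interchange of $\mathbb I_t^{\nu(1-\mu)}$ with the sum that you flag is routine, since the fractional integral commutes with the orthogonal projections onto each $\varphi_n$ and $\tau^{\mu-1}$ is locally integrable.

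Part (d) is where your proposal has a genuine gap: your own inequalities do not produce the claimed rate. From the chain rule $\frac{d}{dt}E_{\mu,\mu}(-\lambda_n t^\mu)=-\lambda_n\mu t^{\mu-1}E_{\mu,\mu}'(-\lambda_n t^\mu)$ you correctly reach a coefficient bound of the form $C\lambda_n t^{\mu-1}/(1+\lambda_n t^\mu)$; but then combining, as you propose, $\lambda_n/(1+\lambda_n t^\mu)\le t^{-\mu}$ with ``the remaining $t^{\mu-1}$ factor'' gives $t^{-\mu}\cdot t^{\mu-1}=t^{-1}$, not $t^{-\mu}$ --- which is precisely the weaker estimate $\lambda_n t^{\mu-1}/(1+\lambda_n t^\mu)\le t^{-1}$ that you yourself recorded first. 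For $0<\mu<1$ the bound $t^{-1}$ is strictly worse than $t^{-\mu}$ as $t\to 0^+$, and no rearrangement of these elementary inequalities closes the gap: at the crossover $\lambda_n t^\mu\approx 1$, i.e.\ $t\approx\lambda_n^{-1/\mu}$ (which enters any interval $(0,T]$ as $n\to\infty$), the coefficient is genuinely of size $t^{-1}$ uniformly in $n$, so $t^{-1}$ is the sharp rate your estimates can deliver. The paper argues differently: it quotes the derivative identity of \cite[Theorem 5.1]{HMS} in the form $\frac{d}{dt}\bigl[E_{\mu,\mu}(-\lambda_n t^\mu)\bigr]=\bigl(E_{\mu,\mu-1}(-\lambda_n t^\mu)+(1-\mu)E_{\mu,\mu}(-\lambda_n t^\mu)\bigr)/(-\mu\lambda_n t^\mu)$, with the factor $\lambda_n t^\mu$ in the denominator, and then the hypothesis $\lambda_n\ge\lambda_1>0$ together with \eqref{Est-MLF} yields $Ct^{-\mu}$ at once. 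Note the tension: the quoted formula is the $z$-derivative identity $E_{\mu,\mu}'(z)=\bigl(E_{\mu,\mu-1}(z)+(1-\mu)E_{\mu,\mu}(z)\bigr)/(\mu z)$ evaluated at $z=-\lambda_n t^\mu$, whereas your expression carries the additional chain-rule factor $-\lambda_n\mu t^{\mu-1}$, which turns the denominator $-\mu\lambda_n t^\mu$ into $t$ and destroys the cancellation of $\lambda_n$ that the paper exploits. So either you adopt the formula in the form the paper uses it, in which case (d) follows as in the paper, or, carrying the chain rule through as you do, your argument only establishes the bound $C t^{-1}$; as written, your proof of assertion (d) does not reach the stated estimate.
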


\begin{proof}
(a) This assertion follows directly from the definition of the operator $S_\mu(t)$ given in \eqref{Ope} and the estimate of the Mittag-Leffler function given in  \eqref{Est-MLF}. 

(b) Let $u\in D(A_B)$ and $t\ge 0$. Then using \eqref{Est-MLF} again, we get that
\begin{align*}
\norm{S_\mu(t)u}^2_{D(A_B)}\le&\sum_{n=1}^\infty\abs{\lambda_n(S_\mu(t)u,\varphi_n)_{L^2(\Om)}}^2=\sum_{n=1}^\infty\abs{\lambda_n(u,\varphi_n)_{L^2(\Om)}E_{\mu,\mu}(-\lambda_nt^\alpha)}^2\le C\norm{u}^2_{D(A_B)}. 
\end{align*}
Thus, $S_\mu(t)u\in D(A_B)$. 

(c) This part is obtained by a simple calculation and using the fact that $(\varphi_n)_{n\in \mathbb{N}}$ is an orthonormal basis of $L^2(\Om)$.

(d) Let $u\in L^2(\Omega)$ and $t>0$. Since the series in \eqref{Ope} converges in $L^2(\Om)$ uniformly on compact subsets of $[0,\infty)$ (this can be easily justified), we have that
\begin{align}\label{DS-1}
\dfrac{dS_\mu(t)u}{dt}=\sum_{n=1}^\infty(u,\varphi_n)_{L^2(\Om)}\dfrac{d}{dt}\Big(E_{\mu,\mu}(-\lambda_nt^\mu)\Big)\varphi_n.
\end{align}
By  \cite[Theorem 5.1]{HMS}, the derivative of the Mittag-Leffler function is given by
 \begin{align*}
\dfrac{d}{dt}\Big[E_{\mu,\mu}(-\lambda_nt^\mu)\Big]=\frac{E_{\mu,\mu-1}(-\lambda_nt^\mu)+(1-\mu)E_{\mu,\mu}(-\lambda_nt^\mu)}{-\mu\lambda_nt^\mu}.
\end{align*}
Using  \eqref{Est-MLF} and the fact that $\lambda_n\ge \lambda_1>0$ for every $n\in\NN$,  we get that there is a constant $C>0$ such that 
\begin{align}\label{DS-2}
\abs{\dfrac{d}{dt}\Big[E_{\mu,\mu}(-\lambda_nt^\mu)\Big]}=\abs{\frac{E_{\mu,\mu-1}(-\lambda_nt^\mu)+(1-\mu)E_{\mu,\mu}(-\lambda_nt^\mu)}{-\mu\lambda_nt^\mu}}\le Ct^{-\mu}.
\end{align}
Thus, the assertion follows by combining \eqref{DS-1}-\eqref{DS-2}.
 
(e)  This part follows directly by applying Lemma \ref{int-frac}. The proof is finished.
\end{proof}

We have the following result of existence and uniqueness of weak solutions.

\begin{theorem}\label{classical}
Let $0\le  \nu\le 1$, $0<\mu \le 1$.
Then for every $u_0\in L^2(\Omega)$, the system \eqref{main-EQ1} has a unique weak solution $u$ given by
\begin{align}\label{class-sol}
 u(\cdot,t)=\sum_{n=1}^\infty (u_0,\varphi_n)_{L^2(\Om)}t^{-(1-\nu)(1-\mu)}E_{\mu,\mu+\nu(1-\mu)}(-\lambda_nt^\mu)\varphi_n,\,\, t>0,
\end{align} 
or equivalently 
\begin{align}
u(\cdot,t)=\mathbb I_t^{\nu(1-\mu)}\Big(t^{\mu-1}S_\mu(t)u_0\Big),\quad t>0.
\end{align}
\end{theorem}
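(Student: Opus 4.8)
The plan is to prove existence by verifying that the explicit series formula \eqref{class-sol} defines a weak solution, and then prove uniqueness separately by a spectral (Fourier) argument. I would begin with \textbf{existence}. The natural strategy is to expand the candidate solution in the eigenbasis $(\varphi_n)_{n\in\NN}$ and check the equation mode-by-mode. Writing $u_0=\sum_n (u_0,\varphi_n)_{L^2(\Om)}\varphi_n$, each Fourier coefficient $u_n(t)$ of the candidate solution is $(u_0,\varphi_n)_{L^2(\Om)}\,t^{\beta-1}E_{\mu,\beta}(-\lambda_n t^\mu)$ with $\beta:=\mu+\nu(1-\mu)$, so that $t^{-(1-\nu)(1-\mu)}=t^{\beta-1}$ confirms that \eqref{class-sol} is exactly the spectral expansion of $\mathbb I_t^{\nu(1-\mu)}(t^{\mu-1}S_\mu(t)u_0)$ after applying Lemma \ref{int-frac} to each term. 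The key algebraic fact is \eqref{hilfer-ml}, which gives $\mathbb D_t^{\mu,\nu}[t^{\beta-1}E_{\mu,\beta}(-\lambda_n t^\mu)]=-\lambda_n t^{\beta-1}E_{\mu,\beta}(-\lambda_n t^\mu)$; hence each mode satisfies $\mathbb D_t^{\mu,\nu} u_n+\lambda_n u_n=0$, which is precisely the spectral form of the homogeneous equation $\mathbb D_t^{\mu,\nu}u+A_Bu=0$.

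Next I would verify the three regularity requirements of the weak-solution definition. For $u\in C((0,T];V_{\frac12})$ I would estimate $\norm{u(\cdot,t)}_{V_{1/2}}^2\asymp\sum_n\lambda_n|u_n(t)|^2$ using the Mittag-Leffler bound \eqref{Est-MLF}, which gives $|E_{\mu,\beta}(-\lambda_n t^\mu)|\le C/(1+\lambda_n t^\mu)$; the factor $\lambda_n t^\mu$ in the denominator controls the extra $\lambda_n$ from the $V_{\frac12}$-norm for each fixed $t>0$, yielding convergence and continuity on $(0,T]$. For $\mathbb I_t^{(1-\nu)(1-\mu)}u\in C([0,T];L^2(\Om))$ I would apply the semigroup property $\mathbb I_t^{(1-\nu)(1-\mu)}\mathbb I_t^{\nu(1-\mu)}=\mathbb I_t^{1-\mu}$, so that $\mathbb I_t^{(1-\nu)(1-\mu)}u=\mathbb I_t^{1-\mu}(t^{\mu-1}S_\mu(t)u_0)$; its $n$-th coefficient is $(u_0,\varphi_n)_{L^2(\Om)}E_{\mu,1}(-\lambda_n t^\mu)$, which tends to $(u_0,\varphi_n)_{L^2(\Om)}$ as $t\downarrow0$, giving continuity up to $t=0$ and the initial condition $\mathbb I_t^{(1-\nu)(1-\mu)}u(\cdot,0)=u_0$ by dominated convergence. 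Continuity of $\mathbb D_t^{\mu,\nu}u$ in $V_{-\frac12}$ follows from the mode-wise identity above combined with the same Mittag-Leffler estimate. Throughout, the uniform convergence of the series (stated in the proof of Lemma \ref{family-op}) justifies interchanging $\mathbb D_t^{\mu,\nu}$ and the summation.

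For \textbf{uniqueness}, given any weak solution $u$ I would set $u_n(t):=(u(\cdot,t),\varphi_n)_{L^2(\Om)}$ and test the variational identity against $\varphi=\varphi_n$, using $\mathcal E_B(u,\varphi_n)=\lambda_n(u,\varphi_n)_{L^2(\Om)}$, to deduce that each $u_n$ solves the scalar fractional ODE $\mathbb D_t^{\mu,\nu}u_n+\lambda_n u_n=0$ with $\mathbb I_t^{(1-\nu)(1-\mu)}u_n(0)=(u_0,\varphi_n)_{L^2(\Om)}$. The Laplace transform formula \eqref{lap-hilfer} then forces $\widehat{u_n}(\lambda)=(u_0,\varphi_n)_{L^2(\Om)}\lambda^{-\nu(1-\mu)}/(\lambda^\mu+\lambda_n)$, whose inverse is exactly the $n$-th coefficient of \eqref{class-sol} by \eqref{lap-ml}; so the solution is uniquely determined coefficientwise. \emph{The main obstacle} I anticipate is the interchange of the nonlocal operators $\mathbb D_t^{\mu,\nu}$ and $\mathbb I_t^{\nu(1-\mu)}$ with the infinite sum, and controlling the singular behavior of the series as $t\downarrow0$: because of the factor $t^{\beta-1}$ with $\beta-1=-(1-\nu)(1-\mu)<0$ (when $(1-\nu)(1-\mu)\ne0$), the solution itself is unbounded near $t=0$, so every norm estimate must be done on $(0,T]$ rather than $[0,T]$, and the initial condition can only be read off after applying the regularizing fractional integral $\mathbb I_t^{(1-\nu)(1-\mu)}$. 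Justifying these interchanges rigorously via the uniform convergence on compact subsets of $(0,T]$ and dominated convergence is where the real work lies.
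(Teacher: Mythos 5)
Your proposal is correct and follows essentially the same route as the paper: the spectral series representation combined with the Mittag-Leffler identity \eqref{hilfer-ml} and the estimate \eqref{Est-MLF}, with uniform convergence on compact subsets of $(0,T]$ justifying the termwise operations and the singular factor $t^{-(1-\nu)(1-\mu)}$ handled exactly as the paper does, while your eigenbasis-projection uniqueness argument is the one the paper declares ``easy to verify'' and in fact carries out verbatim for the adjoint system in the proof of Theorem \ref{Dual-theo-weak}. The only cosmetic deviation is the initial condition: you compute the coefficients of $\mathbb I_t^{(1-\nu)(1-\mu)}u$ explicitly as $(u_0,\varphi_n)_{L^2(\Om)}E_{\mu,1}(-\lambda_n t^\mu)\to (u_0,\varphi_n)_{L^2(\Om)}$ as $t\downarrow 0$, whereas the paper invokes the Laplace-transform formula \eqref{lap-hilfer} --- both are sound, and your computation is precisely the one the paper uses at \eqref{V1} for the backward problem.
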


\begin{proof} 
We give the main ideas of the proof.
By Lemma \ref{family-op}(e), we have that
\begin{align}\label{Est-AU}
\norm{u(\cdot,t)}_{L^2(\Omega)}= \norm{\mathbb I_t^{\nu(1-\mu)}\Big(t^{\mu-1}S_\mu(t)u_0\Big)}_{L^2(\Omega)}
\le Ct^{(1-\nu)(\mu-1)}\norm{u_0}_{L^2(\Omega)}.
\end{align}
We can also easily prove that the series in \eqref{class-sol} is convergent in $L^2(\Omega)$ uniformly in $t\in[\varepsilon, T]$, for every $0<\varepsilon<T$. Thus, we can conclude that $u\in C((0,T];L^2(\Omega))$.  Similarly, we can show that $\mathbb I_t^{(1-\nu)(1-\mu)}u\in C([0,T];L^2(\Omega))$.

Using  \eqref{lap-hilfer} and the Laplace transform, we get that the initial condition is satisfied. 

Next, it follows from \eqref{Est-MLF} that 
\begin{align*}
\norm{\mathbb{D}^{\mu,\nu}_tu(\cdot,t)}_{L^2(\Omega)}=\norm{A_Bu(\cdot,t)}_{L^2(\Omega)}\le C_2t^{\nu(1-\mu)-1}\norm{u_0}_{L^2(\Omega)}.
\end{align*}
Using the fact that the series 
\begin{align*}
\sum_{n=1}^\infty \lambda_n(u_0,\varphi_n)_{L^2(\Om)}t^{-(1-\nu)(1-\mu)}E_{\mu,\mu+\nu(1-\mu)}(-\lambda_nt^\mu)\varphi_n,
\end{align*}
converges in $L^2(\Omega)$ uniformly in $t\in[\varepsilon, T]$, for every $0<\varepsilon<T$, we can also deduce that $\mathbb{D}^{\mu,\nu}_t u\in C((0,T];L^2(\Omega))\subset C((0,T];V_{-\frac 12})$. The uniqueness of solutions is easy to verify. The proof is finished. 
\end{proof}


The following result will be needed (see e.g. \cite{Ke-Wa} for the proof).

\begin{lemma}\label{lem-INE}
Let $0<\alpha<2$, $T>0$ and $\lambda>0$. Then, 
\begin{align}\label{E1}
\int_0^{T}t^{\alpha-1}E_{\alpha,\alpha}(-\lambda t^\alpha)\;dt=
-\frac{1}{\lambda}\int_0^{T}\frac{d}{dt}E_{\alpha,1}(-\lambda t^\alpha)\;dt
=\frac{1}{\lambda}\Big(1-E_{\alpha,\alpha}(-\lambda T^\alpha)\Big).
\end{align}
\end{lemma}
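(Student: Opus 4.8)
The plan is to reduce both equalities in \eqref{E1} to a single differentiation identity for the Mittag-Leffler function, namely
\begin{align*}
\frac{d}{dt}E_{\alpha,1}(-\lambda t^\alpha)=-\lambda\, t^{\alpha-1}E_{\alpha,\alpha}(-\lambda t^\alpha),\qquad t>0,
\end{align*}
after which everything follows by the fundamental theorem of calculus. To establish this identity I would work directly from the power series \eqref{mm}. Since $E_{\alpha,1}$ is entire and the series $\sum_{n\ge 0}(-\lambda t^\alpha)^n/\Gamma(\alpha n+1)$ converges uniformly on compact $t$-intervals, term-by-term differentiation is legitimate, giving
\begin{align*}
\frac{d}{dt}E_{\alpha,1}(-\lambda t^\alpha)=\sum_{n=1}^\infty\frac{(-\lambda)^n\,\alpha n\, t^{\alpha n-1}}{\Gamma(\alpha n+1)}=\sum_{n=1}^\infty\frac{(-\lambda)^n t^{\alpha n-1}}{\Gamma(\alpha n)},
\end{align*}
where I used $\Gamma(\alpha n+1)=\alpha n\,\Gamma(\alpha n)$ to cancel the factor $\alpha n$.

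Next I would reindex by $m=n-1$ and factor out $-\lambda t^{\alpha-1}$, which turns the last sum into
\begin{align*}
-\lambda\, t^{\alpha-1}\sum_{m=0}^\infty\frac{(-\lambda t^\alpha)^m}{\Gamma(\alpha m+\alpha)}=-\lambda\, t^{\alpha-1}E_{\alpha,\alpha}(-\lambda t^\alpha),
\end{align*}
which is precisely the claimed identity. Dividing by $-\lambda$ yields $t^{\alpha-1}E_{\alpha,\alpha}(-\lambda t^\alpha)=-\tfrac{1}{\lambda}\tfrac{d}{dt}E_{\alpha,1}(-\lambda t^\alpha)$, and integrating this over $(0,T)$ gives at once the first equality in \eqref{E1}.

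For the second equality I would apply the fundamental theorem of calculus to the continuous integrand, obtaining
\begin{align*}
-\frac{1}{\lambda}\int_0^T\frac{d}{dt}E_{\alpha,1}(-\lambda t^\alpha)\;dt=-\frac{1}{\lambda}\Big(E_{\alpha,1}(-\lambda T^\alpha)-E_{\alpha,1}(0)\Big),
\end{align*}
and then use $E_{\alpha,1}(0)=1/\Gamma(1)=1$ to arrive at $\tfrac{1}{\lambda}\big(1-E_{\alpha,1}(-\lambda T^\alpha)\big)$.

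There is no genuine analytic obstacle here: the only points demanding care are the justification of term-by-term differentiation (handled by uniform convergence on compacta, since $E_{\alpha,1}$ is entire) and the index shift. I would, however, flag what appears to be a typographical slip in the statement: evaluating the lower endpoint with $E_{\alpha,1}(0)=1$ forces the final member of \eqref{E1} to be $\tfrac{1}{\lambda}\big(1-E_{\alpha,1}(-\lambda T^\alpha)\big)$ rather than $\tfrac{1}{\lambda}\big(1-E_{\alpha,\alpha}(-\lambda T^\alpha)\big)$; indeed the $E_{\alpha,\alpha}$ version fails the sanity check $\int_0^T\to 0$ as $T\downarrow 0$, since $E_{\alpha,\alpha}(0)=1/\Gamma(\alpha)\neq 1$ in general.
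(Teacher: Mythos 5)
Your proof is correct, and it is in essence the argument the paper intends: the paper gives no proof of Lemma \ref{lem-INE} at all (it defers to an external reference), but the chain of equalities displayed in \eqref{E1} already encodes exactly your route, namely the termwise differentiation identity $\frac{d}{dt}E_{\alpha,1}(-\lambda t^{\alpha})=-\lambda\,t^{\alpha-1}E_{\alpha,\alpha}(-\lambda t^{\alpha})$ followed by the fundamental theorem of calculus, so there is no genuine divergence of method to report. Your index-shift computation from the series \eqref{mm} is right, and your flag of the typographical slip is also right: the last member of \eqref{E1} must read $\frac{1}{\lambda}\bigl(1-E_{\alpha,1}(-\lambda T^{\alpha})\bigr)$, not $\frac{1}{\lambda}\bigl(1-E_{\alpha,\alpha}(-\lambda T^{\alpha})\bigr)$. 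This is confirmed independently of your $T\downarrow 0$ sanity check by the standard integration formula $\int_0^T t^{\alpha-1}E_{\alpha,\alpha}(-\lambda t^{\alpha})\,dt=T^{\alpha}E_{\alpha,\alpha+1}(-\lambda T^{\alpha})$ together with the recurrence $E_{\alpha,1}(z)=1+zE_{\alpha,\alpha+1}(z)$ evaluated at $z=-\lambda T^{\alpha}$; note also that the typo is harmless downstream, since in the proof of Theorem \ref{Dual-theo-weak} the lemma is invoked only to bound $|E_{\mu,1}(-\lambda_n(T-t)^{\mu})|$ uniformly, which already follows from \eqref{Est-MLF}. One small wording refinement: for $0<\alpha<1$ the derivative $\frac{d}{dt}E_{\alpha,1}(-\lambda t^{\alpha})$ behaves like $-\lambda t^{\alpha-1}/\Gamma(\alpha)$ as $t\downarrow 0$, so the integrand in the middle member of \eqref{E1} is not continuous on the closed interval $[0,T]$ as you assert; it is, however, absolutely integrable there (the exponent $\alpha-1$ exceeds $-1$) and $t\mapsto E_{\alpha,1}(-\lambda t^{\alpha})$ is continuous at $t=0$, so the fundamental theorem of calculus applies in its improper (absolutely continuous) form and your conclusion stands unchanged.
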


Next, we show the existence and uniqueness of weak solutions to the system \eqref{main-EQ}.

\begin{theorem}\label{theo-weak}
Let $0\le  \nu\le 1$, $0<\mu \le 1$,  $u_0\in L^2(\Omega)$ and $f\in L^{2}((0,T);L^2(\omega))$. Then the system \eqref{main-EQ} has  a unique weak solution $u$ given by
\begin{align}\label{sol-spec}
  u(\cdot,t)=&\sum_{n=1}^\infty (u_0,\varphi_n)_{L^2(\Om)}t^{-(1-\nu)(1-\mu)}E_{\mu,\mu+\nu(1-\mu)}(-\lambda_nt^\mu)\varphi_n\\
 &+\sum_{n=1}^\infty\left(\int_0^t (f(\cdot,\tau),\varphi_n)_{L^2(\Om)}(t-\tau)^{\mu-1}E_{\mu,\mu}(-\lambda_n(t-\tau)^\mu)\;d\tau\right)\varphi_n\notag,
 \end{align}
or equivalently
 \begin{align}
u(\cdot,t)=\mathbb I_t^{\nu(1-\mu)}t^{\mu-1}S_\mu(t)u_0+\int_0^t\,(t-\tau)^{\mu-1}S_\mu(t-\tau)f(\cdot,\tau)\,d\tau.
\end{align}
\end{theorem}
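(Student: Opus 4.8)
The plan is to exploit linearity and split the solution as $u = u_h + u_p$, where $u_h$ solves the homogeneous equation \eqref{main-EQ1} with datum $u_0$ and $u_p$ carries the forcing $f$ with zero initial datum. The first sum in \eqref{sol-spec} is exactly the homogeneous solution already constructed in Theorem \ref{classical}, so its regularity \eqref{regu} and the initial condition come for free; what remains is to analyze the convolution term $u_p(\cdot,t) = \int_0^t (t-\tau)^{\mu-1} S_\mu(t-\tau) f(\cdot,\tau)\,d\tau$ and to prove uniqueness.

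To derive the formula I would project onto the eigenbasis. Setting $u_n(t) = (u(\cdot,t),\varphi_n)_{L^2(\Om)}$ and $f_n(t) = (f(\cdot,t),\varphi_n)_{L^2(\Om)}$, testing the variational identity \eqref{Var-I} with $\varphi = \varphi_n$ decouples the system into the scalar Hilfer equations $\mathbb{D}_t^{\mu,\nu} u_n + \lambda_n u_n = f_n$ with $(\mathbb{I}_t^{(1-\nu)(1-\mu)} u_n)(0) = (u_0,\varphi_n)_{L^2(\Om)}$. Applying the Laplace transform and \eqref{lap-hilfer} yields $\widehat{u}_n(\lambda) = \big(\lambda^{-\nu(1-\mu)}(u_0,\varphi_n)_{L^2(\Om)} + \widehat{f}_n(\lambda)\big)/(\lambda^\mu + \lambda_n)$. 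Inverting with the Mittag-Leffler transform \eqref{lap-ml} (with $k=0$ and $\beta = \mu+\nu(1-\mu)$ for the first term, and $\beta = \mu$ combined with the convolution theorem for the second) reproduces exactly the two sums in \eqref{sol-spec}, and the equivalence with the compact form is immediate from the definition \eqref{Ope} of $S_\mu$.

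The analytic core is to show that $u_p$ meets the regularity demanded in Definition \ref{def-strong-sol}. For $u_p \in C((0,T];V_{\frac12})$ I would estimate $\|u_p(\cdot,t)\|_{V_{\frac12}}^2 \approx \sum_n (1+\lambda_n)\abs{c_n(t)}^2$, where $c_n(t) = \int_0^t (t-\tau)^{\mu-1} E_{\mu,\mu}(-\lambda_n(t-\tau)^\mu) f_n(\tau)\,d\tau$, bounding the kernel by $\abs{E_{\mu,\mu}(-\lambda_n s^\mu)} \le C/(1+\lambda_n s^\mu)$ from \eqref{Est-MLF} and combining the Cauchy--Schwarz and Young convolution inequalities; the decay of the Mittag-Leffler factor supplies the spatial smoothing needed to absorb the factor $\lambda_n$. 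The same kernel estimates applied to the Duhamel relation $\mathbb{D}_t^{\mu,\nu} u_p = f - A_B u_p$ give $\mathbb{D}_t^{\mu,\nu} u_p \in C((0,T];V_{-\frac12})$, while $\mathbb{I}_t^{(1-\nu)(1-\mu)} u_p \in C([0,T];L^2(\Om))$ with vanishing value at $t=0$ follows from \eqref{Conv} and Lemma \ref{int-frac}: after applying $\mathbb{I}_t^{(1-\nu)(1-\mu)}$ the operator kernel behaves like $t^{-\nu(1-\mu)}$ near the origin, which is integrable since $\nu(1-\mu)<1$, so its convolution with $f$ tends to $0$.

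With the regularity established, the variational identity \eqref{Var-I} is verified termwise: the homogeneous part satisfies $\mathbb{D}_t^{\mu,\nu} u_h = -A_B u_h$ by \eqref{hilfer-ml}, and the convolution satisfies $\mathbb{D}_t^{\mu,\nu} u_p + A_B u_p = f$ by the Duhamel structure; summing gives \eqref{main-EQ}. Uniqueness reduces to the homogeneous case, since the difference of two weak solutions with the same $u_0$ and $f$ solves \eqref{main-EQ1} with zero datum and hence vanishes by Theorem \ref{classical}. I expect the main obstacle to be the sharp convolution estimate yielding $u_p \in C((0,T];V_{\frac12})$, that is, the parabolic-type gain of spatial regularity from merely $L^2$-in-time forcing, where the singular factor $(t-\tau)^{\mu-1}$, the weight $\lambda_n$, and the Mittag-Leffler decay must be balanced precisely.
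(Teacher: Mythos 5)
Your overall architecture --- the Duhamel splitting $u=u_h+u_p$, projection onto the eigenbasis, Laplace transform via \eqref{lap-hilfer} with inversion via \eqref{lap-ml}, and uniqueness by reducing the difference of two weak solutions to the homogeneous problem of Theorem \ref{classical} --- is exactly the route the paper intends: its proof of Theorem \ref{theo-weak} consists of the single sentence that it ``follows as the proof of Theorem \ref{classical}'', so your derivation of \eqref{sol-spec} and your uniqueness argument correctly supply details the paper omits.

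The genuine gap is in the step you yourself flag as the main obstacle, and the estimate you propose for it does not close. The relevant kernel is $K_n(s)=s^{\mu-1}E_{\mu,\mu}(-\lambda_n s^{\mu})$. First, for $0<\mu\le\frac12$ one has $K_n(s)\sim s^{\mu-1}/\Gamma(\mu)$ as $s\to 0^{+}$, which is not square integrable, so Cauchy--Schwarz cannot even produce a pointwise-in-$t$ bound on $c_n(t)$ from $f_n\in L^2(0,T)$; Young's inequality only yields $c_n\in L^2(0,T)$, an a.e.-in-$t$ statement, not control of $u_p(\cdot,t)$ at a fixed time. Second, even for $\frac12<\mu<1$, where $K_n\in L^2(0,t)$, Cauchy--Schwarz produces the factor $\lambda_n\int_0^t|K_n(s)|^2\,ds$, and the change of variables $r=\lambda_n s^{\mu}$ (together with the lower bound $E_{\mu,\mu}(-\lambda_n s^{\mu})\ge c>0$ for $s\le \lambda_n^{-1/\mu}$) shows that this quantity behaves like $\lambda_n^{\frac1\mu-1}$, which tends to infinity with $n$ since $\mu<1$. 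So the Mittag--Leffler decay does \emph{not} ``absorb the factor $\lambda_n$'' pointwise in time, and $\sum_n(1+\lambda_n)|c_n(t)|^2$ cannot be bounded by $\|f\|_{L^2(\omega\times(0,T))}^2$ this way. For comparison, in the case $\mu=1$ the conclusion $u_p\in C([0,T];V_{\frac12})$ for $L^2$ forcing is true, but it is obtained from maximal $L^2$ regularity together with the trace embedding $L^2(0,T;D(A_B))\cap H^1(0,T;L^2(\Omega))\hookrightarrow C([0,T];V_{\frac12})$, not from kernel bounds; for $\mu<1$ the analogous trace space is a different interpolation space, and for $\mu\le\frac12$ functions with only $H^\mu$-regularity in time need not be continuous in $t$ at all. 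To complete this step you must either strengthen the hypothesis on $f$ (e.g.\ H\"older continuity or higher integrability in time, as is standard in the fractional Duhamel literature) or invoke a maximal-regularity/interpolation argument --- and note that the paper's one-line proof is silent on precisely this point, so the difficulty you identified is real and is not resolved there either.
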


\begin{proof}
The proof follows as the proof of Theorem \ref{classical}. We omit the details for brevity.
\end{proof}

\subsection{Well-posedness of the associated adjoint system}

In order to investigate the controllability properties of the system \eqref{main-EQ},  we need to study the following backward system: 
\begin{equation}\label{ACP-Dual}
\begin{cases}
\mathbb D_{t,T}^{\mu,(1-\nu)} v +A_Bv=0\;\;&\mbox{ in }\; \Omega\times (0,T),\\
\mathbb I_{t,T}^{\nu(1-\mu)}v(\cdot,T)=v_0 \;\;\;\; \;&\mbox{ in }\;\Omega,
\end{cases}
\end{equation}
which (by using the integration by parts formula \eqref{IP01}) can be viewed as the adjoint system associated with  \eqref{main-EQ}.

We adopt the following definition of weak solutions to the backward system \eqref{ACP-Dual}.

\begin{definition}
Let $v_0\in L^2(\Omega)$ and $T>0$.
A function $v$ is said to be a weak solution of the system \eqref{ACP-Dual}, if  the following properties hold.
\begin{itemize}
\item Regularity and final condition:
\begin{equation}\label{Dual-egu}
\begin{cases}
v\in C([0,T);V_{\frac 12}),\\
\mathbb I_{t,T}^{\nu(1-\mu)}v\in C([0,T];L^2(\Omega)), \\
\mathbb D_{t,T}^{\mu,(1-\nu)} v\in C([0,T);V_{-\frac 12}),
\end{cases}
\end{equation}
and $\mathbb I_{t,T}^{\nu(1-\mu)}v(\cdot,T)=v_0$.
\item Variational  identity: For every $\varphi\in V_{\frac 12}$ and a.e. $t\in (0,T)$, we have
\begin{align}\label{Dual-Var-I}
\langle \mathbb D_{t,T}^{\mu,(1-\nu)} v(\cdot,t),\varphi\rangle_{V_{-\frac 12},V_{\frac 12}} +\mathcal E_B(v(\cdot,t),\varphi)=0.
\end{align}
\end{itemize}
\end{definition}




Next, we show the existence and uniqueness of solutions to the backward system \eqref{ACP-Dual}.

\begin{theorem}\label{Dual-theo-weak}
Let $0\le  \nu\le 1$, $0<\mu \le 1$, and $v_0\in L^2(\Omega)$. Then the  system \eqref{ACP-Dual} has  a unique weak solution $v$ given by
 \begin{align}\label{Dual-sol-spec}
 v(\cdot,t)=&\sum_{n=1}^\infty (v_0,\varphi_n)_{L^2(\Om)}(T-t)^{-\nu(1-\mu)}E_{\mu,1-\nu(1-\mu)}(-\lambda_n(T-t)^\mu)\varphi_n.
 \end{align}
 Moreover, the unique weak solution $v$ can be analytically extended to the half-plane 
 $$\Sigma_T:=\{z\in\CC:\;\mbox{Re}(z)<T\}.$$
\end{theorem}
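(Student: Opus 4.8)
The plan is to reduce the backward system \eqref{ACP-Dual} to the forward system \eqref{main-EQ1} by a time reversal and then read off everything from Theorem \ref{classical}. Concretely, I would set $s:=T-t$ and $w(\cdot,s):=v(\cdot,T-s)$. The first step is the bookkeeping that records how the right-sided operators transform. Writing $\tau=T-\sigma$ in the definition \eqref{I-R} shows that $\mathbb I_{t,T}^{\alpha}v(\cdot,t)=(\mathbb I_s^{\alpha}w)(\cdot,s)$, i.e. the right Riemann--Liouville integral in $t$ becomes the left one in $s$. Since $\frac{d}{dt}=-\frac{d}{ds}$, inserting this into the definition \eqref{D-R} of the right Hilfer derivative with second parameter $1-\nu$ gives
\[
\mathbb D_{t,T}^{\mu,(1-\nu)}v(\cdot,t)=\mathbb I_s^{(1-\nu)(1-\mu)}\frac{d}{ds}\Big(\mathbb I_s^{\nu(1-\mu)}w\Big)(\cdot,s)=\mathbb D_s^{\mu,(1-\nu)}w(\cdot,s),
\]
the leading minus sign in \eqref{D-R} cancelling the one produced by $\frac{d}{dt}=-\frac{d}{ds}$. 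Hence the equation in \eqref{ACP-Dual} becomes $\mathbb D_s^{\mu,(1-\nu)}w+A_Bw=0$, which is exactly \eqref{main-EQ1} with $\nu$ replaced by $1-\nu$.

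Under the same substitution the final condition $\mathbb I_{t,T}^{\nu(1-\mu)}v(\cdot,T)=v_0$ turns into $(\mathbb I_s^{\nu(1-\mu)}w)(\cdot,0)=v_0$, which is precisely the initial condition of \eqref{main-EQ1} for the pair $(\mu,1-\nu)$ (note $1-(1-\nu)=\nu$); the variational identity \eqref{Dual-Var-I} transforms into that of the forward weak formulation, and the three regularity requirements in \eqref{Dual-egu} correspond term by term to those of the forward solution after reversing the time interval. Consequently Theorem \ref{classical}, applied with the pair $(\mu,1-\nu)$ and datum $v_0$, yields a unique weak solution $w$, namely by \eqref{class-sol}
\[
w(\cdot,s)=\sum_{n=1}^\infty (v_0,\varphi_n)_{L^2(\Om)}\,s^{-\nu(1-\mu)}E_{\mu,\,\mu+(1-\nu)(1-\mu)}(-\lambda_n s^\mu)\varphi_n.
\]
Undoing the substitution via $s=T-t$ and using the elementary identity $\mu+(1-\nu)(1-\mu)=1-\nu(1-\mu)$ to rewrite the second Mittag--Leffler index produces exactly \eqref{Dual-sol-spec}, and uniqueness for \eqref{ACP-Dual} follows from that for \eqref{main-EQ1}.

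For the analytic extension I would replace the real variable $t$ by a complex $z$ and observe that for $z\in\Sigma_T$ one has $\RE(T-z)=T-\RE(z)>0$, so $T-z$ lies in the open right half-plane; thus, with the principal branch, $(T-z)^{-\nu(1-\mu)}$ and $(T-z)^\mu$ are holomorphic in $z$ on $\Sigma_T$, and since $E_{\mu,\beta}$ is entire, each summand of \eqref{Dual-sol-spec} is an $L^2(\Om)$-valued holomorphic function on $\Sigma_T$. The main point is locally uniform convergence. Fixing a compact $K\subset\Sigma_T$, there is $\delta>0$ with $\RE(T-z)\ge\delta$, hence $|T-z|\ge\delta$ and $|\arg(T-z)|\le\theta_0<\frac\pi2$ for $z\in K$; therefore $|\arg(-\lambda_n(T-z)^\mu)|=\pi-\mu|\arg(T-z)|\ge\pi-\mu\theta_0$, and one may fix $\kappa$ with $\frac{\mu\pi}{2}<\kappa<\min\{\pi,\mu\pi\}$ and $\kappa\le\pi-\mu\theta_0$ (such a $\kappa$ exists because $\theta_0<\frac\pi2$). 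The estimate \eqref{Est-MLF} then gives $|E_{\mu,1-\nu(1-\mu)}(-\lambda_n(T-z)^\mu)|\le C(1+\lambda_n\delta^\mu)^{-1}\le C$ uniformly on $K$, while $(T-z)^{-\nu(1-\mu)}$ stays bounded on $K$. Hence the tails $\sum_{n>M}|(v_0,\varphi_n)_{L^2(\Om)}|^2|\cdots|^2$ are dominated by $C\sum_{n>M}|(v_0,\varphi_n)_{L^2(\Om)}|^2\to0$ uniformly in $z\in K$, giving locally uniform convergence in $L^2(\Om)$; the vector-valued Weierstrass theorem then makes $v(\cdot,z)$ holomorphic on $\Sigma_T$.

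The only genuine subtlety I anticipate is the first step: verifying that the right-sided fractional operators transform into the left-sided ones and that the second Hilfer parameter correctly flips from $1-\nu$ to $\nu$, so that the final datum becomes the right initial datum. Once that correspondence is pinned down the existence, uniqueness and representation are immediate corollaries of Theorem \ref{classical}, and the analytic continuation is a routine application of \eqref{Est-MLF} together with the control of $\arg(T-z)$ on compact subsets of $\Sigma_T$.
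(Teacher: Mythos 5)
Your proof is correct, but it reaches existence and uniqueness by a genuinely different route than the paper. You reduce \eqref{ACP-Dual} to the forward problem \eqref{main-EQ1} via the time reversal $s=T-t$, $w(\cdot,s)=v(\cdot,T-s)$, correctly checking that $\mathbb I_{t,T}^{\alpha}$ becomes $\mathbb I_s^{\alpha}$, that the Hilfer parameter flips so that $\mathbb D_{t,T}^{\mu,1-\nu}v$ becomes $\mathbb D_s^{\mu,1-\nu}w$ with initial datum $(\mathbb I_s^{\nu(1-\mu)}w)(\cdot,0)=v_0$, and that the identity $\mu+(1-\nu)(1-\mu)=1-\nu(1-\mu)$ turns \eqref{class-sol} into \eqref{Dual-sol-spec}; all of these computations are sound. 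The paper instead works on \eqref{ACP-Dual} directly: uniqueness by projecting onto each eigenfunction $\varphi_n$, obtaining the terminal-value fractional ODE $\mathbb D_{t,T}^{\mu,1-\nu}v_n=-\lambda_n v_n$ with $\mathbb I_{t,T}^{\nu(1-\mu)}v_n(T)=0$ and quoting its explicit solution, and existence by verifying termwise, via \eqref{Est-MLF} and Lemma \ref{lem-INE}, the three regularity statements in \eqref{Dual-egu} and the final condition for the series \eqref{Dual-sol-spec}. Your reduction is structurally illuminating --- it explains why the adjoint system carries the parameter $1-\nu$ --- and it avoids redoing all the series estimates; its cost is that it inherits whatever level of detail Theorem \ref{classical} carries (its proof in the paper is only sketched), and it silently requires the forward solution's regularity to hold up to and including $t=T$ (which the construction in the proof of Theorem \ref{classical} does provide, even though the forward weak-solution definition states some continuity only on the open interval) so that the reversed solution has the regularity at $t=0$ that \eqref{Dual-egu} demands. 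For the analytic extension your argument essentially coincides with the paper's tail estimate on $\{z\in\CC:\,\mathrm{Re}(z)\le T-\delta\}$; your explicit control of $\arg\bigl(-\lambda_n(T-z)^\mu\bigr)$ to justify the applicability of \eqref{Est-MLF} is in fact more careful than what the paper writes down.
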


\begin{proof}
Since the representation \eqref{Dual-sol-spec} and the analytic continuation of solutions are needed in the proof of the main results, we provide more details.

Let $0\le  \nu\le 1$, $0<\mu \le 1$ and $v_0\in L^2(\Omega)$.
First, we show the uniqueness of solutions. Indeed, let $v$ be a solution of \eqref{ACP-Dual} with $v_0=0$. Taking the inner product of \eqref{ACP-Dual} with $\varphi_n$ and setting $v_n(t):=(v(t),\varphi_n)_{L^2(\Om)}$, we get that (given that $A_B$ is a selfadjoint operator)
\begin{equation}\label{O1}
\mathbb D_{t,T}^{\mu,1-\nu} v_n(t)=-\lambda_nv_n(t),\;\;\mbox{ for a.e. }\; t\in (0,T).
\end{equation}
Since $\mathbb I_{t,T}^{\nu(1-\mu)}v\in C([0,T];L^2(\Omega))$, it follows that $\mathbb I_{t,T}^{\nu(1-\mu)}v_n(t)=(\mathbb I_{t,T}^{\nu(1-\mu)}v(\cdot,t),\varphi_n)_{L^2(\Om)}\in C[0,T]$ and
\begin{align*}
\left|\mathbb I_{t,T}^{\nu(1-\mu)}v_n(t)\right|^2\le \sum_{n=1}^\infty\left|\mathbb I_{t,T}^{\nu(1-\mu)}v_n(t)\right|^2\le \left\|\mathbb I_{t,T}^{\nu(1-\mu)}v(\cdot,t)\right\|_{L^2(\Omega)}^2\to 0\;\mbox{ as }\; t\to T.
\end{align*}
This implies that
\begin{equation}\label{O2}
\mathbb I_{t,T}^{\nu(1-\mu)}v_n(T)=0.
\end{equation}
Since the fractional ordinary differential equation \eqref{O1} with the final condition \eqref{O2} has a unique solution $v_n$ given by
\begin{align*}
v_n(t)=(T-t)^{-\nu(1-\mu)}E_{\mu,1-\nu(1-\mu)}(-\lambda_n(T-t)^\mu)\mathbb I_{t,T}^{\nu(1-\mu)}v_n(T),
\end{align*}
it follows that $v_n(t)=0$ for every $n\in\NN$. Since $(\varphi_n)_{n\in\NN}$ is a complete orthonormal system in $L^2(\Omega)$, we have that $v=0$ in $\Omega\times (0,T)$ and the proof of the uniqueness is complete.

Next, we show the existence of solutions. Let  $v_{0,n}:=(v_0,\varphi_n)_{L^2(\Om)}$, $1\le n\le k$ where $k \in\NN$,  and set
\begin{align*}
v_k(x,t):=&\sum_{n=1}^k v_{0,n}(T-t)^{-\nu(1-\mu)}E_{\mu,1-\nu(1-\mu)}(-\lambda_n(T-t)^\mu)\varphi_n(x).
\end{align*}

(a) Let $v$ be given by \eqref{Dual-sol-spec}. We claim that  $\mathbb I_{t,T}^{\nu(1-\mu)}v\in C([0,T];L^2(\Omega))$. Integrating termwise, we have that (see \cite{THS10})
\begin{align}\label{V1}
\mathbb I_{t,T}^{\nu(1-\mu)}v_k(\cdot,t)=&\sum_{n=1}^k v_{0,n}E_{\mu,1}(-\lambda_n(T-t)^\mu)\varphi_n.
\end{align}
Using \eqref{Est-MLF} and the estimates in Lemma \ref{lem-INE}, we have that for every $t\in [0,T]$ and $m$, $k\in\NN$ with $m>k$,
\begin{align*}
\|\mathbb I_{t,T}^{\nu(1-\mu)}v_k(t)-\mathbb I_{t,T}^{\nu(1-\mu)}v_m(t)\|_{L^2(\Omega)}^2=&\sum_{n=k+1}^m |v_{0,n}E_{\mu,1}(-\lambda_n(T-t)^\mu)|^2\\
\le &C^2\sum_{n=k+1}^m|v_{0,n}|^2\rightarrow 0\;\mbox{ as }\; k,m\to\infty.
\end{align*}
We have shown that  
\begin{align*}
&\sum_{n=1}^\infty v_{0,n}E_{\mu,1}(-\lambda_n(T-t)^\mu)\varphi_n \to    \mathbb I_{t,T}^{\nu(1-\mu)}v(\cdot,t)\;\mbox{ in }\;L^2(\Omega),
\end{align*}
and that the convergence is uniform in $t\in [0,T]$. Hence, $\mathbb I_{t,T}^{\nu(1-\mu)}v\in C([0,T];L^2(\Omega))$. Using \eqref{Est-MLF} and Lemma \ref{lem-INE} again, we get that there is a constant $C>0$ such that for every $t\in (0,T)$ we have
\begin{align}
 \|\mathbb I_{t,T}^{\nu(1-\mu)}v(\cdot,t)\|_{L^2(\Omega)}^2\le C^2\|v_0\|_{L^2(\Omega)}^2.
\end{align}

(b) We prove that $\mathbb D_{t,T}^{\mu,1-\nu} v\in C([0,T);L^2(\Omega))$. Since $\mathbb D_{t,T}^\alpha v(\cdot,t)=-A_Bv(\cdot,t)$, we have that
\begin{align}
\|D_{t,T}^{\mu,1-\nu}v(\cdot,t)\|_{L^2(\Omega)}^2\le &\sum_{n=1}^\infty |v_{0,n}|^2|\lambda_n(T-t)^{-\nu(1-\mu)}E_{\mu,1-\nu(1-\mu)}(-\lambda_n(T-t)^\mu)|^2\notag\\
\le &C(T-t)^{2(\nu\mu-\nu-\mu)}\|v_0\|_{L^2(\Omega)}^2.
\end{align}
Proceeding as above we can deduce that $\mathbb D_{t,T}^{\mu,1-\nu} v\in C([0,T);L^2(\Omega))$.

(c) It follows from \eqref{V1} that
\begin{align*}
\mathbb I_{t,T}^{\nu(1-\mu)}v(\cdot,T)=\sum_{n=1}^\infty v_{0,n}\varphi_n=v_0.
\end{align*}

(d) Finally, since $E_{\mu,1-\nu(1-\mu)}(-\lambda_nz)$ is an entire function, it follows that the function
\begin{align*}
(T-t)^{-\nu(1-\mu)}E_{\mu,1-\nu(1-\mu)}(-\lambda_n(T-t)^\mu)
\end{align*}
 can be analytically extended to the half-plane $\Sigma_T$. This implies that the function
 \begin{align*}
 \sum_{n=1}^k v_{0,n}(T-z)^{-\nu(1-\mu)}E_{\mu,1-\nu(1-\mu)}(-\lambda_n(T-z)^\mu)\varphi_n
\end{align*}
is analytic in $\Sigma_T$. Let $\delta>0$ be fixed but otherwise arbitrary. Let $z\in\CC$ satisfy $\mbox{Re}(z)\le T-\delta$. Then, using Lemma \ref{lem-INE}, we get that
 \begin{align*}
& \left\Vert\sum_{n=k+1}^\infty v_{0,n}(T-z)^{-\nu(1-\mu)}E_{\mu,1-\nu(1-\mu)}(-\lambda_n(T-z)^\mu)\varphi_n\right\Vert_{L^2(\Omega)}^2\\
  \le &C\sum_{n=k+1}^\infty |v_{0,n}|^2|T-z|^{2(\nu\mu-\nu)}\left(\frac{1}{1+\lambda_n|T-z|^{\mu}}\right)^2\\
  \le &C\delta^{2(\nu\mu-\nu)}\sum_{n=N+1}^\infty |v_{0,n}|^2 \to 0\;\mbox{ as }\; k\to\infty.
 \end{align*}
 We have shown that
 \begin{align*}
 v(\cdot,z):=&\sum_{n=1}^\infty (v_0,\varphi_n)(T-z)^{\nu\mu-\nu}E_{\mu,1-\nu(1-\mu)}(-\lambda_n(T-z)^\mu)\varphi_n
 \end{align*}
 is uniformly convergent in any compact subset of $\Sigma_T$. Hence, $v$ is also analytic in $\Sigma_T$.
The proof of the theorem is finished.
\end{proof}

\begin{remark}\label{rem-dual-sol}
{\em 
We notice that the solution $v$ of the backward system \eqref{ACP-Dual} satisfies the following additional regularity: There is a constant $C>0$ such that
\begin{equation}\label{sol-v}
\|v(\cdot,t)\|_{L^2(\Omega)}^2\le C(T-t)^{-2\nu(1-\mu)}\|v_0\|_{L^2(\Omega)}^2.
\end{equation}
Using \eqref{sol-v} we can deduce that $v\in C([0,T);L^2(\Omega))\cap L^1((0,T);L^2(\Omega))$.
}
\end{remark}

Next, we show that under the assumption that $A_B$ has the unique continuation property, the adjoint system \eqref{ACP-Dual} satisfies the  unique continuation principle.

\begin{proposition}\label{pro-uni-con}
Let $0\le  \nu\le 1$, $0<\mu \le 1$.
Let $v_0\in L^2(\Omega)$ and let $\omega\subset\Omega$ be an arbitrary non-empty open set. Assume that $A_B$ has the unique continuation property in the sense of \eqref{ucp}.
Let $v$ be the unique weak solution of \eqref{ACP-Dual}. If $v=0$ on $\omega\times (0,T)$, then $v=0$ on $\Omega\times (0,T)$.
\end{proposition}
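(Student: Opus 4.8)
The plan is to exploit the explicit series representation \eqref{Dual-sol-spec} of $v$ together with its analyticity in time (Theorem \ref{Dual-theo-weak}), reduce the vanishing of $v$ on $\omega\times(0,T)$ to the vanishing of a meromorphic $L^2(\omega)$-valued function of a spectral parameter, and then isolate the contribution of each eigenvalue by a residue computation, so that the scalar hypothesis \eqref{ucp} can be applied mode by mode.

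First I would upgrade the hypothesis $v=0$ on $\omega\times(0,T)$ to vanishing on a whole half-line. By Theorem \ref{Dual-theo-weak} the solution $v(\cdot,z)$ extends analytically to the half-plane $\Sigma_T$ as an $L^2(\Om)$-valued function. Composing with the continuous restriction operator $L^2(\Om)\to L^2(\omega)$ shows that $z\mapsto v(\cdot,z)|_\omega$ is analytic on the connected set $\Sigma_T$ and vanishes on the real segment $(0,T)$, hence vanishes on all of $\Sigma_T$ by the identity theorem for vector-valued analytic functions. In particular, writing $s=T-t$, we obtain $v(\cdot,T-s)|_\omega=0$ for every $s>0$.

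Next I would take the $L^2(\Om)$-valued Laplace transform in $s$. Setting $\beta:=1-\nu(1-\mu)\in(0,1]$ and $c_n:=(v_0,\varphi_n)_{L^2(\Om)}$, the representation \eqref{Dual-sol-spec} reads $v(\cdot,T-s)=\sum_n c_n s^{\beta-1}E_{\mu,\beta}(-\lambda_n s^\mu)\varphi_n$, and the estimate \eqref{sol-v} guarantees that $G(p):=\int_0^\infty e^{-ps}v(\cdot,T-s)\,ds$ converges for $\RE(p)>0$. Using \eqref{lap-ml} with $k=0$ and $\alpha=\mu$ term by term — justified for large real $p$ by dominated convergence and then extended to $\RE(p)>0$ by analyticity — I get
\[
G(p)=\sum_{n=1}^\infty c_n\,\frac{p^{\mu-\beta}}{p^\mu+\lambda_n}\,\varphi_n ,
\]
the series converging in $L^2(\Om)$ because $|p^\mu+\lambda_n|\ge\lambda_n$ whenever $\RE(p)>0$ (as then $\RE(p^\mu)>0$). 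Since $G(p)|_\omega=0$ and the factor $p^{\mu-\beta}$ never vanishes, dividing it out yields $\sum_n \tfrac{c_n}{p^\mu+\lambda_n}\varphi_n=0$ in $L^2(\omega)$ for all $\RE(p)>0$.

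Finally I would separate the modes. Grouping the terms according to the distinct eigenvalues $0<\lambda_1<\lambda_2<\cdots$, let $\psi_k$ denote the orthogonal projection of $v_0$ onto the eigenspace associated with $\lambda_k$, and substitute $\eta=p^\mu$, which sweeps out the open sector $\{\eta:|\arg\eta|<\mu\pi/2\}$ as $p$ ranges over $\RE(p)>0$. The function $\Phi(\eta):=\sum_k \tfrac{\psi_k}{\eta+\lambda_k}$ is $L^2(\omega)$-valued analytic on the connected set $\CC\setminus\{-\lambda_k\}$ (orthogonality of the $\psi_k$ gives $L^2$-convergence, locally uniform away from the poles) and vanishes on that sector, hence vanishes identically by the identity theorem. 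Taking the residue at $\eta=-\lambda_k$, i.e. computing $\lim_{\eta\to-\lambda_k}(\eta+\lambda_k)\Phi(\eta)$, gives $\psi_k=0$ in $\omega$ for every $k$. Each $\psi_k$ lies in $D(A_B)$ and satisfies $A_B\psi_k=\lambda_k\psi_k$ with $\lambda_k>0$, so the unique continuation property \eqref{ucp} forces $\psi_k=0$ in $\Om$; since the $\psi_k$ exhaust $v_0$, we conclude $v_0=0$ and therefore $v=0$ in $\Om\times(0,T)$. I expect the main obstacle to be precisely this mode-separation step: one must group eigenfunctions by distinct eigenvalues so that each residue produces a genuine eigenfunction to which the scalar statement \eqref{ucp} applies, and one must carefully justify the analyticity of $\Phi$ and the termwise Laplace transform (convergence near $s=0$, where $s^{\beta-1}$ is only just integrable, and the interchange of sum and integral).
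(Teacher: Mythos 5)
Your proposal is correct and follows essentially the same route as the paper's proof: extend $v$ analytically in time via Theorem \ref{Dual-theo-weak} so that the vanishing on $\omega$ holds for all $t<T$, take the Laplace transform in $T-t$ using \eqref{lap-ml}, substitute $\eta=p^\mu$, continue analytically, and isolate each eigenspace by a residue at $\eta=-\lambda_k$ before invoking \eqref{ucp}. Your one deviation is a genuine streamlining rather than a different method: by dividing out the nonvanishing factor $p^{-(1-\nu)(1-\mu)}$ while still in the half-plane $\mathrm{Re}(p)>0$, the continued function $\sum_k\frac{\psi_k}{\eta+\lambda_k}$ is single-valued and meromorphic on $\CC\setminus\{-\lambda_k\}_{k\in\NN}$ (whereas the paper carries the multivalued power $\eta^{-(1-\nu)(1-\mu)/\mu}$, whose principal branch cut lies along the negative axis where the poles sit), so your residue yields the eigenprojection $\psi_k$ directly and you avoid the paper's extra step \eqref{zero} disposing of the complex constant $(-\lambda_l)^{-(1-\nu)(1-\mu)/\mu}$.
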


\begin{proof}
Let $v_0\in L^2(\Omega)$ and let $\omega\subset\Omega$ be an arbitrary non-empty open set. Let $v$ be the unique weak solution of \eqref{ACP-Dual} and assume that $v=0$ in $\omega\times (0,T)$.
Since $v=0$ in $\omega\times (0,T)$ and $v:\; [0,T)\to L^2(\Omega)$ can be analytically extended to the half-plane $\Sigma_T$ (by Theorem \ref{Dual-theo-weak}), it follows that for a.e. $x\in\omega$ and $t\in (-\infty,T)$, we have
\begin{align}\label{Exp-v}
v(x,t)=\sum_{n=1}^\infty (v_0,\varphi_n)_{L^2(\Om)}(T-t)^{-\nu(1-\mu)}E_{\mu,1-\nu(1-\mu)}(-\lambda_n(T-t)^\mu)\varphi_n(x)=0.
\end{align}
Let $(\lambda_k)_{k\in\NN}$ be the set of all eigenvalues of the operator $A_B$ and let $(\psi_{k_j})_{1\le j\le m_k}$ be an orthonormal basis for $\mbox{Ker}(\lambda_k-A_B)$, where $m_k$ is the multiplicity of $\lambda_k$. Then \eqref{Exp-v} can be rewritten as
\begin{align}\label{e156}
v(x,t)=&\sum_{k=1}^\infty \left(\sum_{j=1}^{m_k}(v_0,\psi_{k_j})_{L^2(\Om)}\psi_{k_j}(x)\right)(T-t)^{-\nu(1-\mu)}E_{\mu,1-\nu(1-\mu)}(-\lambda_k(T-t)^\mu)\notag\\
  &=0,\;\;\qquad\qquad x\in\omega,\;t\in (-\infty,T).
\end{align}
Let $z\in\CC$ with $\mbox{Re}(z):=\eta>0$ and let $M\in\NN$. Since the system $\{\psi_{k_j}\}$, for $1\le j\le m_k$, $1\le k\le M$ is orthonormal, we have that there is a constant $C>0$ such that
\begin{align*}
&\left\Vert \sum_{k=1}^M \left(\sum_{j=1}^{m_k}(v_0,\psi_{k_j})_{L^2(\Om)}\psi_{k_j}(x)\right)e^{z(t-T)}(T-t)^{-\nu(1-\mu)}E_{\mu,1-\nu(1-\mu)}(-\lambda_k(T-t)^\mu)\right\Vert_{L^2(\Omega)}^2\\
\le &\sum_{k=1}^\infty \left(\sum_{j=1}^{m_k}|(v_0,\psi_{k_j})_{L^2(\Om)}|^2\right)e^{2\eta(t-T)}|(T-t)^{-\nu(1-\mu)}E_{\mu,1-\nu(1-\mu)}(-\lambda_k(T-t)^\mu)|^2\\
\le & Ce^{2\eta(t-T)}(T-t)^{2\nu\mu-2\nu}\|v_0\|_{L^2(\Omega)}^2.
\end{align*}
Letting
\begin{align*}
w_M(\cdot,t):=&\sum_{k=1}^M \left(\sum_{j=1}^{m_k}(v_0,\psi_{k_j})_{L^2(\Om)}\psi_{k_j}(x)\right)e^{z(t-T)}(T-t)^{-\nu(1-\mu)}E_{\mu,1-\nu(1-\mu)}(-\lambda_k(T-t)^\mu),
\end{align*}
we have shown that
\begin{align}\label{nporm-2}
&\|w_M(\cdot,t)\|_{L^2(\Omega)}
\le Ce^{\eta(t-T)}(T-t)^{-\nu(1-\mu)}\|v_0\|_{L^2(\Omega)}.
\end{align}

The right hand side of \eqref{nporm-2} is integrable over $t\in (-\infty,T)$. More precisely, we have that
\begin{align*}
\int_{-\infty}^Te^{\eta(t-T)}\left[(T-t)^{-\nu(1-\mu)}\|v_0\|_{L^2(\Omega)}\right]\;dt
=&\int_0^\infty e^{-\tau}\frac{\tau^{\mu\nu-\nu+1-1}}{\eta^{1+\nu\mu-\nu}}\;d\tau |v_0\|_{L^2(\Omega)}\\
=&\frac{\Gamma(1-\nu(1-\mu))}{\eta^{1-\nu(1-\mu)}}\|v_0\|_{L^2(\Omega)}.
\end{align*}
By the Lebesgue dominated convergence theorem, we can deduce that
\begin{align}\label{e158}
&\int_{-\infty}^Te^{z(t-T)}\left[\sum_{k=1}^\infty \left(\sum_{j=1}^{m_k}(v_0,\psi_{k_j})_{L^2(\Om)}\psi_{k_j}(x)\right)(T-t)^{-\nu(1-\mu)}E_{\mu,1-\nu(1-\mu)}(-\lambda_k(T-t)^\mu)\right]dt\notag\\
=&\sum_{k=1}^\infty \sum_{j=1}^{m_k}v_{0,k_j}\psi_{k_j}(x)\left(\int_{-\infty}^Te^{z(t-T)}(T-t)^{-\nu(1-\mu)}E_{\mu,1-\nu(1-\mu)}(-\lambda_k(T-t)^\mu)dt\right)\notag\\
=&\sum_{k=1}^\infty \sum_{j=1}^{m_k}\left(\frac{(v_0,\psi_{k_j})_{L^2(\Om)}z^{-(1-\nu)(1-\mu)}}{z^{\mu}+\lambda_k}\right)\psi_{k_j}(x),\;\;x\in\Omega,\;\;\mbox{Re}(z)>0.
\end{align}
To arrive at \eqref{e158}, we have used the fact that
\begin{align*}
&\int_{-\infty}^Te^{z(t-T)}(T-t)^{-\nu(1-\mu)}E_{\mu,\mu-\nu(1-\mu)}(-\lambda_k(T-t)^\mu)dt\\
=&\int_0^\infty e^{-z\tau}\tau^{1-\nu(1-\mu)-1}E_{\mu,1-\nu(1-\mu)}(-\lambda_k\tau^\mu)\;d\tau=\frac{z^{-(1-\nu)(1-\mu)}}{z^\mu+\lambda_k}
\end{align*}
These identities follow from a simple change of variable and \eqref{lap-ml}.

It follows from \eqref{e156} and \eqref{e158} that
\begin{align*}
\sum_{k=1}^\infty \sum_{j=1}^{m_k}\left(\frac{(v_0,\psi_{k_j})_{L^2(\Om)}z^{-(1-\nu)(1-\mu)}}{z^{\mu}+\lambda_k}\right)\psi_{k_j}(x)=0,\;\;x\in\omega,\;\mbox{Re}(z)>0.
\end{align*}
Letting $\eta:=z^\mu$, we have shown that
\begin{align}\label{e416}
\sum_{k=1}^\infty \sum_{j=1}^{m_k}\left(\frac{(v_0,\psi_{k_j})_{L^2(\Om)}\eta^{\frac{-(1-\nu)(1-\mu)}{\mu}}}{\eta+\lambda_k}\right)\psi_{k_j}(x)=0,\;\;x\in\omega,\;\mbox{Re}(\eta)>0.
\end{align}
Using the analytic continuation in $\eta$,  we have that the identity  \eqref{e416} holds for every $\eta\in\CC\setminus\{-\lambda_k\}_{k\in\NN}$. Taking a suitable small circle about $-\lambda_l$ and not passing through nor encircling $\{-\lambda_k\}_{k\ne l}$ and integrating \eqref{e416} on that circle we get that
\begin{align*}
w_l:= \sum_{j=1}^{m_l}\left[(v_0,\psi_{l_j})(-\lambda_l)^{-\frac{(1-\nu)(1-\mu)}{\mu}}\right]\psi_{l_j}(x) =0,\;\;x\in\omega.
\end{align*}
Since $(A_B-\lambda_l)w_l=0$ in $\Omega$, $w_l=0$ in $\omega$,  and by assumption $A_B$ has the unique continuation property in the sense of \eqref{ucp}, it follows that $w_l=0$ in $\Omega$ for every $l$. Since $(\psi_{l_j})_{1\le j\le m_k}$ is linearly independent in $L^2(\Omega)$, we get that $(v_0,\psi_{l_j})_{L^2(\Om)}(-\lambda_l)^{\beta}=0$ for $1\le j\le m_k$, $k\in\NN$ and $\beta=-\frac{(1-\nu)(1-\mu)}{\mu}$. This implies that
\begin{align}\label{zero}
0=&(-\lambda_l)^\beta(v_0,\psi_{l_j})_{L^2(\Om)}\notag\\
=&\lambda_l^\beta\left[\cos(\beta\pi)+i\sin(\beta\pi)\right](v_0,\psi_{l_j})_{L^2(\Om)}\notag\\
=&\lambda_l^\beta\cos(\beta\pi)(v_0,\psi_{l_j})_{L^2(\Om)}+i\lambda_l^\beta\sin(\beta\pi)(v_0,\psi_{l_j})_{L^2(\Om)}.
\end{align}
It follows from \eqref{zero} that $(v_0,\psi_{l_j})_{L^2(\Om)}=0$ for $1\le j\le m_k$. Hence, $v=0$ in $\Omega\times (0,T)$. The proof is finished.
\end{proof}

\section{Proof of the main results}\label{prof-ma-re}

In this section we give the proof of the main results stated in Section \ref{main-res}. Before proceeding with the proof, we show in the following remark that to study the approximate controllability or the mean approximate controllability of the system \eqref{main-EQ}, it suffices to consider the case $u_0=0$.

\begin{remark} 
{\em
Consider the following two systems:
\begin{align}\label{EQ-0}
\begin{cases}
\mathbb D_t^{\mu,\nu} v+A_Bv=f|_{\omega\times(0,T)}\;\;&\mbox{ in }\; \Omega\times (0,T),\\
(\mathbb I_t^{(1-\nu)(1-\mu)}v)(\cdot,0)=0 &\mbox{ in }\;\Omega,
\end{cases}
\end{align}
and
\begin{align}\label{EQ-U0}
\begin{cases}
\mathbb D_t^{\mu,\nu} w+A_Bw=0\;\;&\mbox{ in }\; \Omega\times (0,T),\\
(\mathbb I_t^{(1-\nu)(1-\mu)}w)(\cdot,0)=u_0 &\mbox{ in }\;\Omega.
\end{cases}
\end{align}

Given $u_0\in L^2(\Om)$, let  $w$ be the weak solution of  \eqref{EQ-U0}. Assume that the system \eqref{EQ-0} is approximately controllable and let $u_1\in L^2(\Omega)$. Then, for every $\varepsilon>0$, there exists a control function $f\in L^2(\omega\times(0,T))$ such that the corresponding unique weak solution $v$ of  \eqref{EQ-0} satisfies
\begin{align}\label{control}
\|v(\cdot,T)-(u_1-w(\cdot,T))\|_{L^2(\Omega)}\le\varepsilon.
\end{align}
By definition, we have that the function $v+w$ solves the system \eqref{main-EQ}, and it follows from \eqref{control} that  
\begin{align*}
\|(v+w)(\cdot,T)-u_1\|_{L^2(\Omega)}\le\varepsilon.
\end{align*}
Hence, \eqref{main-EQ} is approximately controllable.  The case of the  mean approximate controllability follows similarly. 
}
\end{remark}



%

\begin{proof}[\bf Proof of Theorem \ref{main-Theo}] 
Let $0\le\nu< 1$ and $0<\mu<1$.
We recall that the system \eqref{main-EQ} is approximately controllable if $\mathcal R(u_0,T)$, with $u_0=0$, is dense in $L^2(\Omega)$. Since  $D(A_B)$ (the domain of $A_B$) is dense in $L^2(\Omega)$, it is sufficient to show that
\begin{equation}\label{inclu}
D(A_B)\subseteq \Big\{u(\cdot,T):\; f\in L^2(\omega\times(0,T))\Big\}. 
\end{equation}
Indeed, assume that $u_0=0$ and let $\phi\in D(A_B)$. We set
\begin{align*}
\psi(\cdot,t):=\frac{\Gamma(\mu)^2(T-t)^{1-\mu}}{T}\left[S_\mu(T-t)-2t\dfrac{d}{dt}S_\mu(T-t)\right]\phi.
\end{align*}

We claim that $\psi\in L^2((0,T);L^2(\Om))$. Using Lemma \ref{family-op}, we get that there is a constant $C>0$ such that for every $t\in (0,T)$ we have
\begin{align}\label{jj}
\norm{\psi(\cdot,t)}^2_{L^2(\Om)}\le& \left(2\frac{\Gamma(\mu)^2(T-t)^{1-\mu}}{T}\right)^2\left[\norm{S_\mu(T-t)\phi}^2_{L^2(\Om)}+(2t)^2\norm{\dfrac{d}{dt}S_\mu(T-t)\phi}^2_{L^2(\Om)}\right]\notag\\
 \le&\left(2\frac{\Gamma(\mu)^2(T-t)^{1-\mu}}{T}\right)^2 (C^2+(2t^{1-\mu}C)^2)\norm{\phi}_{L^2(\Om)}^2.
\end{align}
Integrating \eqref{jj} over $(0,T)$ we get that
\begin{align*}
&\int_0^T\norm{\psi(\cdot,t)}^2_{L^2(\Om)}\;dt&\\
\le &\left(2\frac{\Gamma(\mu)^2\norm{\phi}_{L^2(\Om)}}{T}\right)^2\left[C^2\int_0^T(T-t)^{2(1-\mu)}\,dt+(2C)^2\int_0^Tt^{2(1-\mu)}(T-t)^{2(1-\mu)}\,dt\right]\\ 
&\le \left(2\frac{\Gamma(\mu)^2\norm{\phi}}{T}\right)^2\left[C^2\dfrac{T^{\alpha+1}}{\alpha+1}+(2C)^2T^{2\alpha+1}\dfrac{\Gamma(\alpha+1)}{\Gamma(2\alpha+2)}\right]<\infty,
\end{align*}
where we have set $\alpha:=2(1-\mu)$, and the claim is proved.

By Theorem \ref{theo-weak}, the system \eqref{main-EQ} with right hand side $\psi$ has a unique weak solution $u$ given by
\begin{align*}
u(\cdot,t)=\int_0^t(t-\tau)^{\mu-1}S_\mu(t-\tau)\psi(\cdot,\tau)\,d\tau.
\end{align*} 

We claim $u(\cdot,T)=\phi$. Using the properties contained in Lemma \ref{family-op}(e), we get that
\begin{align*}
u(\cdot,T)&=\int_0^T(T-\tau)^{\mu-1}S_\mu(T-\tau)\psi(\cdot,\tau)\,d\tau\\
&=\dfrac{\Gamma(\mu)^2}{T}\int_0^T\left(S^2_\mu(T-\tau)\phi-2\tau S_\mu(T-\tau)\dfrac{d}{dt}S_\mu(T-\tau)\phi\right)d\tau\\ 
&=\dfrac{\Gamma(\mu)^2}{T}\left(\int_0^T S^2_\mu(T-\tau)\phi\, d\tau-\int_0^T2\tau S_\mu(T-\tau)\dfrac{d}{dt}S_\mu(T-\tau)\phi\, d\tau\right)\\
 &= \dfrac{\Gamma(\mu)^2}{T}\left(\int_0^T S^2_\mu(T-\tau)\phi\, d\tau+\int_0^T\tau \dfrac{d}{d\tau} S^2_\mu(T-\tau)\phi\, d\tau\right)\\
 &=\dfrac{\Gamma(\mu)^2}{T}\left(\Big[\tau S^2_\mu(T-\tau)\phi\Big]_{\tau=0}^{\tau=T} +\int_0^T S^2_\mu(T-\tau)\phi\, d\tau-\int_0^T S^2_\mu(T-\tau)\phi\, d\tau\right)\\&=\dfrac{\Gamma(\mu)^2}{T}(TS^2_\mu(0)\phi)=\phi,
\end{align*}
where we also used that $S_\mu(0)=\frac{1}{\Gamma(\mu)}$. We have shown \eqref{inclu} and the proof is finished.
\end{proof}


\begin{proof}[\bf Proof of Theorem \ref{second-Theo}]
$0\le\nu\le 1$ and $0<\mu\le 1$.
Assume that the operator $A_B$ has the unique continuation property in the sense of \eqref{ucp}.
Let $u$ be the unique weak solution of \eqref{main-EQ} with $u_0=0$ and $v$ the unique weak solution of the adjoint system \eqref{ACP-Dual} with $v_0\in L^2(\Omega)$. 
Then, integrating by parts, we get that (by using \eqref{IP01} and the fact that $(A_Bu,v)_{L^2(\Om)}=(u,A_Bu)_{L^2(\Om)}$),
\begin{align}\label{int-del}
0=&\int_0^{T}\int_{\Omega}\left(\mathbb D_t^{\mu,\nu} u+A_Bu-f\right)v\;dxdt\notag\\
=&\int_0^{T}\int_{\Omega}v\mathbb D_t^{\mu,\nu} u\;dxdt+\int_0^{T}\int_{\Omega}vA_Bu\;dxdt -\int_0^{T}\int_{\omega}fv\;dxdt\notag\\
=&\int_0^{T}\int_{\Omega}u \mathbb D_{t,T}^{\mu,(1-\nu)} v\;dxdt+\int_{\Omega}\left[\mathbb I_t^{(1-\nu)(1-\mu)}u(x,T)\mathbb I_{t,T}^{\nu(1-\mu)}v(x,T)\right]\;dx\notag\\
&+\int_0^{T}\int_{\Omega}uA_Bv\;dxdt-\int_0^{T}\int_{\omega}fv\;dxdt\notag\\
=&\int_0^{T}\int_{\Omega}\left(\mathbb D_{t,T}^{\mu,(1-\nu)} v+A_Bv\right)u\;dxdt\notag\\
&+\int_{\Omega}\left[\mathbb I_t^{(1-\nu)(1-\mu)}u(x,T)\mathbb I_{t,T}^{\nu(1-\mu)}v(x,T)\right]\;dx-\int_0^{T}\int_{\omega}fv\;dxdt\notag\\
=&\int_{\Omega}\left[\mathbb I_t^{(1-\nu)(1-\mu)}u(x,T)\mathbb I_{t,T}^{\nu(1-\mu)}v(x,T)\right]\;dx-\int_0^{T}\int_{\omega}fv\;dxdt.
\end{align}
We  have shown  that
\begin{align}\label{eq41}
\int_{\Omega}\left[\mathbb I_t^{(1-\nu)(1-\mu)}u(x,T)v_0\right]\;dx=\int_0^T\int_{\omega}fv\;dxdt.
\end{align}
To prove that the set 
$$\left\{\mathbb I_t^{(1-\nu)(1-\mu)}u(\cdot,T):\; f\in L^2(\omega\times (0,T))\right\}$$ 
is dense in $L^2(\Omega)$, we have to show that if $v_0\in  L^2(\Omega)$ is such that
\begin{align}\label{eq42}
\int_{\Omega}\left[\mathbb I_t^{(1-\nu)(1-\mu)}u(x,T)v_0(x)\right]\;dx=0
\end{align}
for every $f\in L^2(\omega\times (0,T))$, then $v_0=0$. Indeed, let $v_0$ satisfy \eqref{eq42}. It follows from \eqref{eq41} and \eqref{eq42} that
\begin{align*}
\int_0^T\int_{\omega}fv\;dxdt=0
\end{align*}
for every $f\in L^2(\omega\times (0,T))$. By the fundamental lemma of the calculus of variations, we have that
\begin{align*}
v=0 \;\mbox{ in }\; \omega\times (0,T).
\end{align*}
It follows from Proposition \ref{pro-uni-con} that
\begin{align*}
v=0  \;\mbox{ in }\; \Omega\times (0,T).
\end{align*}
Since the solution $v$ of \eqref{ACP-Dual} is unique, it follows that $v_0=0$ on $\Omega$. The proof of the theorem is finished.
\end{proof}


We conclude the paper with the following remark.

\begin{remark}
{\em We mention the following facts. Let $(1-\nu)(1-\mu)\ne 0$.
\begin{enumerate}
\item Consider the following mapping:
\begin{align*}
F: L^2(\omega\times(0,T))\to L^2(\Omega),\; f\mapsto \mathbb I_t^{(1-\nu)(1-\mu}u(\cdot,T),
\end{align*}
where $u$ is the unique weak solution of \eqref{main-EQ} associated to $u_0=0$. Then it is easy to see that the system \eqref{main-EQ} is mean approximately controllable in time $T>0$ if and only if the range of $F$, that is, $\mbox{Ran}(F)$ is dense in $L^2(\Omega)$. This is equivalent to $\mbox{Ker}(F^\star)=\{0\}$, where $F^\star$ is the adjoint of $F$. It follows from the proof of Theorem \ref{second-Theo} (more precisely from \eqref{eq41}) that $F^\star$ is the mapping given by
\begin{align*}
F^\star: L^2(\Omega)\to L^2(\omega\times(0,T)),\;v_0\mapsto v\big|_{\omega\times(0,T)},
\end{align*}
where $v$ is the unique solution of the adjoint system \eqref{ACP-Dual}. Again $\mbox{Ker}(F^\star)=\{0\}$ is the unique continuation principle, namely,
\begin{align*}
(v\;\mbox{ solution of }\; \eqref{ACP-Dual},\;  v\big|_{\omega\times(0,T)}=0) \Longrightarrow \;v_0=0\;\mbox{ in }\;\Omega.
\end{align*}

\item Now consider the mapping
\begin{align*}
G: L^2(\omega\times(0,T))\to L^2(\Omega),\; f\mapsto u(\cdot,T),
\end{align*}
where $u$ is the unique weak solution of \eqref{main-EQ} associated with $u_0=0$. As above, the system \eqref{main-EQ} is approximately controllable in time $T>0$ if and only if the range of $G$, that is, $\mbox{Ran}(G)$ is dense in $L^2(\Omega)$, and this is equivalent to $\mbox{Ker}(G^\star)=\{0\}$, where $G^\star$ is the adjoint of $G$. 

Next, we compute $G^\star$. Indeed, let $f\in L^2(\omega\times(0,T))$ and $\psi\in L^2(\Omega)$. Then,
\begin{align*}
(G(f),\psi)_{L^2(\Omega)}:=\int_\Omega (Gf)(x,t)\psi(x)\,dx,
\end{align*}
where
\begin{align*}
(Gf)(x, T):=\sum_{n=1}^\infty \left( \int_0^T(f(\cdot,t),\varphi_n(\cdot))_{L^2(\Omega)}(T-t)^{\mu-1}E_{\mu,\mu}(-\lambda_n(T-t)^\mu)dt\right) \varphi_n(x).
\end{align*}
Now we have that
\begin{align*}
(G(f),\psi)_{L^2(\Omega)}=&\int_\Omega\left( \sum_{n=1}^\infty \left( \int_0^T(f(\cdot,t),\varphi_n(\cdot))_{L^2(\Omega)}(T-t)^{\mu-1}E_{\mu,\mu}(-\lambda_n(T-t)^\mu)dt\right) \varphi_n(x)\right) \psi(x)\,dx\\ 
=& \sum_{n=1}^\infty \int_0^T \left( \int_\Omega (f(\cdot,t),\varphi_n(\cdot))_{L^2(\Omega)}\varphi_n(x)\psi(x)\,dx\right) (T-t)^{\mu-1}E_{\mu,\mu}(-\lambda_n(T-t)^\mu)dt\\
=&\sum_{n=1}^\infty\int_0^T\left( \int_\Omega\int_\Omega\,f(y,t)\varphi_n(y)\varphi_n(x)\psi(x)\,dydx\right)(T-t)^{\mu-1}E_{\mu,\mu}(-\lambda_n(T-t)^\mu)dt.
\end{align*}
Using \eqref{sol-spec} in Theorem \ref{theo-weak} and applying Fubini's theorem we get that for all $\psi\in L^2(\Omega)$, $0\le t<T$ and for a.e $x\in \Omega$,
\begin{align*}
(G(f),\psi)_{L^2(\Omega)}=&\sum_{n=1}^\infty\int_0^T\left( \int_\Omega(\varphi_n(\cdot),\Psi(\cdot))_{L^2(\Omega)}f(y,t)\varphi_n(y)\,dy\right)(T-t)^{\mu-1}E_{\mu,\mu}(-\lambda_n(T-t)^\mu)dt\\
&=\int_\Omega\int_0^T\left( \sum_{n=1}^\infty (\varphi_n(\cdot),\psi(\cdot))_{L^2(\Omega)}(T-t)^{\mu-1}E_{\mu,\mu}(-\lambda_n(T-t)^\mu)\right) \varphi_n(y)f(y,t)\,dtdy\\
&=(f,(G^\ast\psi))_{L^2(\Omega)}
\end{align*}
We have shown that for all $\psi\in L^2(\Omega)$, $0\le t<T$ and for a.e $x\in \Omega$,
\begin{align}\label{gg}
(G^\ast\psi)(x, t)=\sum_{n=1}^\infty (\varphi_n,\psi)_{L^2(\Omega)}(T-t)^{\mu-1}E_{\mu,\mu}(-\lambda_n(T-t)^\mu)\varphi(x).
\end{align}
We can see from \eqref{gg} that $\mbox{Ker}(G^\star)$ is not related to the adjoint system \eqref{ACP-Dual}. For that reason the approximate controllability of the system \eqref{main-EQ} (in the case $0\le \nu<1$ and $0<\mu<1$)  is not directly related to the unique continuation principle for the adjoint system \eqref{ACP-Dual}.
\end{enumerate}
}
\end{remark}

%

\bibliographystyle{plain}
\bibliography{biblio}

\end{document}